\DeclareMathOperator{\M}{\mathcal{M}}
\DeclareMathOperator{\R}{\mathbb{R}}
\begin{document}

\title
{Almost positive kernels on compact Riemannian manifolds}


\author[B. Gariboldi]{Bianca Gariboldi}
\address[B. Gariboldi]{Dipartimento di Ingegneria Gestionale, dell'Informazione e della Produzione,
Universit\`a degli Studi di Bergamo, Viale Marconi 5, Dalmine BG, Italy}
\email{biancamaria.gariboldi@unibg.it}

\author[G. Gigante]{Giacomo Gigante}
\address[G. Gigante]{Dipartimento di Ingegneria Gestionale, dell'Informazione e della Produzione,
Universit\`a degli Studi di Bergamo, Viale Marconi 5, Dalmine BG, Italy}
\email{giacomo.gigante@unibg.it}

\begin{abstract}
We show how to build a kernel  
\[
K_X(x,y)=\sum_{m=0}^Xh(\lambda_m/{\lambda_X})\varphi_m(x)\overline{\varphi_m(y)}
\]
 on a compact Riemannian manifold $\M$,
 which is positive up to a negligible error and
such that $K_X(x,x)\approx X$. Here $0=\lambda_0^2\le\lambda_1^2\le\ldots$ are the eigenvalues of the Laplace-Beltrami operator on $\M$, listed with repetitions, and $\varphi_0,\,\varphi_1,\ldots$ an associated system of eigenfunctions, forming an orthonormal basis of $L^2(\M)$.
The function $h$ is smooth up to a certain minimal degree, even, compactly supported in $[-1,1]$ with $h(0)=1$, and $K_X(x,y)$ turns out to be an approximation to the identity.

\end{abstract}

\subjclass[2010]{58C40, 42C15 (11K38)}

\keywords{Approximation to the identity, Parametrix of the wave equation, compact Riemannian manifold, Schwartz kernel, }

\thanks{All authors have been supported by an Italian GNAMPA 2020 project. They also wish to thank Luca Brandolini, Leonardo Colzani and Giancarlo Travaglini for several useful conversations on the subject of the paper.}

\maketitle

\newtheorem{theorem}{Theorem}
\newtheorem{corollary}[theorem]{Corollary}
\newtheorem{lemma}[theorem]{Lemma}
\newtheorem{definition}[theorem]{Definition}
\newtheorem{prop}[theorem]{Proposition}
\newtheorem{ex}[theorem]{Example}
\newtheorem{remark}[theorem]{Remark}


\section{Introduction}

Let $\mathcal C\subset\R^d$ be convex and symmetric 
and define the trigonometric polynomial
\begin{align*}\label{trigonometric}
T_{\mathcal C}(x)&=\frac{1}{\mathrm{card}((\frac 12 \mathcal C)\cap \mathbb Z^d)}\sum_{ \ell,  k \in \frac12 \mathcal C}e^{2\pi i ( \ell- k)\cdot  x}=\frac{1}{\mathrm{card}((\frac 12 \mathcal C)\cap \mathbb Z^d)}\left|\sum_{ \ell \in \frac12 \mathcal C}e^{2\pi i  \ell\cdot  x}\right|^2\\
&=\sum_{m\in\mathcal C}\frac{\mathrm{card}(\frac 12\mathcal C\cap(\frac12\mathcal C+m)\cap\mathbb Z^d)}{\mathrm{card}((\frac 12 \mathcal C)\cap \mathbb Z^d)}e^{2\pi i m\cdot  x}.
\end{align*}
The above identities immediately show that $T_{\mathcal C}(x)\ge0$, that its Fourier coefficients vanish outside $\mathcal C$, that $\widehat T(0)=1$, and that $T_\mathcal C(0)=\mathrm{card}( \frac 12\mathcal C\cap \mathbb Z^d)$.

In particular, when $\mathcal C$ is the axis-parallel, symmetric box of sides $2Y_1,\ldots,2Y_d$, then $T_\mathcal C $ is just the standard $d$-dimensional Fej\'er kernel
\[
F_{2\lfloor Y_1/2\rfloor+1}(x_1)\ldots F_{2\lfloor Y_d/2\rfloor+1}(x_d),
\]    
where
\[
F_n(t)=\sum_{-n\le m\le n}\left(1-\frac {|m|}n\right)e^{2\pi i mt}.
\]
One could also let $\mathcal C$ be the ball centered at the origin and with radius $Y$, and in this case the non-vanishing Fourier coefficients of $T_{\mathcal C}$ are just those corresponding to the eigenvalues $4\pi^2 |m|^2$ of the (positive) Laplace-Beltrami operator on the torus which are smaller than or equal to $4\pi^2 Y^2$. Notice that in this case, there are $\approx Y^d$ such eigenvalues, and that since $T_\mathcal C(0)=\mathrm{card}( \frac12\mathcal C\cap \mathbb Z^d)\approx Y^d$, then $T_\mathcal C(0)$ is essentially the number of eigenvalues less than or equal to $4\pi Y^2$.

Let now $\left(  \mathcal{M},g\right)  $ be a $d$-dimensional compact connected
Riemannian manifold, where the Riemannian distance $d\left(x,y\right)$ and the Riemannian measure are normalized so that the total measure of $\mathcal{M}$ equals $1$. Let
$\left\{  \lambda_{m}^{2}\right\}  _{m=0}^{+\infty}$ be the sequence of
eigenvalues of the (positive) Laplace-Beltrami operator $\Delta$, listed in
increasing order with repetitions, and let $\left\{  \varphi_{m}\right\}
_{m=0}^{+\infty}$ be an associated sequence of orthonormal eigenfunctions. In
particular $\varphi_{0}\equiv1$ and $\lambda_{0}=0$. This allows to define the
Fourier coefficients of $L^{1}(\mathcal{M})$ functions as
\[
\widehat{f}(\lambda_{m})=\int_{\mathcal{M}}f(x)\overline{\varphi_{m}(x)}dx,
\]
where the integration is with respect to the Riemannian measure, and the associated Fourier series
\[
\sum_{m=0}^{+\infty}\widehat{f}(\lambda_{m})\varphi_{m}(x).
\]
We would like to extend the construction of the above type of kernel to the case of Riemannian manifolds. In particular it would be very interesting to have a kernel of the form 
\begin{equation}
\label{kappaX}
K_X(x,y)=\sum_{m=0}^X a(\lambda_m,{\lambda_X})\varphi_m(x)\overline{\varphi_m(y)}
\end{equation}
which is nonnegative and such that $a(0,\lambda_X)=1$, and $K_X(x,x)\gtrsim X$. If possible, it would be great to have 
$0\le a(\lambda_m,\lambda_X)\le 1$.

Observe that by Weyl's estimates, $X$ is essentially the number of eigenvalues $\lambda_m^2$ that are smaller than or equal to $\lambda_X^2$ (and this number is essentially $\lambda_X^{d}$). Thus, this type of kernel could be considered as a generalization to the case of manifolds of the kernel $T_{\mathcal C}$ defined above, when $\mathcal C$ is the ball of radius $Y\approx X^{1/d}\approx \lambda_X$. 

We do not know if this type of kernels in a general manifold exist.
G. Travaglini \cite{giancarlo} proved that one can define certain Fej\'er kernels on
compact Lie groups which are nonnegative.   
Furthermore, it is easy to see that, in a compact two-point homogeneous space, if a kernel has finite spectrum, then also its square (which is nonnegative) has finite spectrum and a suitable normalization has mean one. In particular, R. Askey \cite{askey} showed that the kernels corresponding to certain Ces\`aro means are positive in certain compact two-point homogeneous spaces, and conjectured their positivity in all such spaces (see also \cite{AG}).

The first natural choice that comes to mind when in need of one such kernel is the heat kernel 
\[
p_t(x,y)=\sum_{m=0}^{+\infty}e^{-\lambda_m^2t}\varphi_m(x)\overline{\varphi_m(y)}, \quad t>0.
\]
It is well known that the above heat kernel is positive, all the coefficients are clearly between $0$ and $1$, the coefficient corresponding to $\lambda_0^2$ equals $1$, and $p_t(x,x)\approx t^{-d/2}$ for small $t$. The only problem with it is therefore that the coefficients do not vanish for $m>t^{-d/2}$. It can be proved (see \cite{BDS}) that
\[
\left|\sum_{m=X+1}^{+\infty}e^{-\lambda_m^2t}\varphi_m(x)\overline{\varphi_m(y)}\right|\lesssim t^{-d+1/2}(X^{2/d}t)^{d-3/2}e^{-X^{2/d}t}.
\] 
Thus, setting $t=cX^{-2/d}\log X$, the kernel
\[
\widetilde p_t(x,y)=\sum_{m=0}^{X}e^{-\lambda_m^2t}\varphi_m(x)\overline{\varphi_m(y)}=p_t(x,y)+O(X^{2-c-1/d}/\log X)
\]
is positive up to the remainder $O(X^{2-c-1/d}/\log X)$, all its Fourier coefficients vary between $0$ and $1$, the coefficient corresponding to $\lambda_0^2$ equals $1$, but $\widetilde p_t(x,x)\approx p_t(x,x)\approx X/\log^{d/2}X$. This strategy therefore gives a good estimate of the remainder, uniform in the variables $x$ and $y$, but generates a logarithmic loss in the diagonal estimate of the kernel.
Observe that the choice $t=cX^{-2/d}$ would give 
$p_t(x,x)\approx X$, as desired, but the remainder 
would be too big, precisely $O(X^{2-1/d})$.

Throughout the paper, we will denote $\mathcal F_d$ the $d$-dimensional Fourier transform
\[
\mathcal F_d f(\xi)=\int_{\R^d}f(x)e^{-2\pi i x\cdot\xi}dx
,\]
and when $f$ will be radial, with a slight abuse of notation, by ${\mathcal F_d}f(r)$ we will mean ${\mathcal F_d}f(z)$ for all those $z\in\R^d$ such that $|z|=r$. We will also denote $\mathcal{C}$ the cosine transform 
\[\mathcal{C}f(s)=\int_{\mathbb{R}} f(t)\cos(st)dt \]
and its inverse (on even functions) $\mathcal{C}^{-1}$ by
\[ \mathcal C^{-1} f(t)=\dfrac{1}{2\pi}\int_{\R} f(s)\cos(st)ds=\dfrac{1}{2\pi}\mathcal C f(t).\]

Our main result is the following
\begin{theorem}\label{main}
(i) There exists a nonnegative function $\alpha_0\in\mathcal C^{\infty}(\M\times\M)$, with $\alpha_0(x,x)=1$ such that the following holds.
Let $h$ be an integrable radial function on $\R^d$, compactly supported in the ball centered at the origin and with radius $1$ and such that for $G>d+1$ and for some positive constant $C$,
\[
\left|\mathcal F_dh(t)\right|\le C\frac 1{(1+t)^{2G}}.
\]
Then
\begin{align*}
K_X(x,y)&:=\sum_{m=0}^Xh\left(\frac{\lambda_m}{\lambda_X}\right)\varphi_m(x)\overline{\varphi_m(y)}\\
&=\frac{\alpha_0(x,y)}{(2\pi)^d}\lambda_X^d {\mathcal F_d}h\left(\frac{\lambda_Xd(x,y)}{2\pi}\right)
+O\left(\frac{\lambda_X^{d-2}}{\left(1+\lambda_Xd(x,y)\right)^{\lfloor 2G\rfloor-2d-2}}\right).
\end{align*}
(ii) For any integer $G$ there exists a (non vanishing) integrable radial function $h$ defined in $\mathbb R^d$, compactly supported in the unit ball, such that $0\le h(x)\le h(0)=1$ for all $x$, and such that for all $t$
\[
0\le\mathcal F_dh(t)\le C\frac{1}{(1+t)^{2G}}.
\]
\end{theorem}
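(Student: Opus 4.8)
\emph{Proof proposal.} For part (i) the plan is to transfer the problem from the spectral multiplier $h(\lambda_m/\lambda_X)$ to the wave propagator $\cos(t\sqrt{\Delta})$ via a cosine transform, and then to insert the Hadamard parametrix of the wave equation. Since $h$ is radial, Fourier inversion on $\R^{d}$ gives $h(\rho)=\int_{\R}\Phi(u)\cos(2\pi\rho u)\,du$, where $\Phi(u):=\int_{\R^{d-1}}\mathcal{F}_{d}h\bigl(\sqrt{u^{2}+|v|^{2}}\bigr)\,dv$ is the integral of $\mathcal{F}_{d}h$ over the hyperplane $\{\xi_{1}=u\}$; the hypothesis $|\mathcal{F}_{d}h(t)|\le C(1+t)^{-2G}$ forces $|\Phi(u)|\lesssim(1+|u|)^{d-1-2G}$, and, since $h$ is compactly supported and (by this hypothesis) lies in $H^{s}$ for $s<2G-d/2$, a controlled number of derivatives of $\mathcal{F}_{d}h$ -- and hence of $\Phi$ -- decay likewise, with a loss. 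Setting $\rho=\lambda_m/\lambda_X$ and rescaling $u=\lambda_X t/(2\pi)$ turns this into $h(\lambda_m/\lambda_X)=\frac{\lambda_X}{2\pi}\int_{\R}\Phi\bigl(\frac{\lambda_X t}{2\pi}\bigr)\cos(\lambda_m t)\,dt$; since $h$ is supported in the unit ball the sum in $K_X$ is automatically finite, and multiplying by $\varphi_m(x)\overline{\varphi_m(y)}$ and summing (the interchange justified by the local Weyl bound $\sum_{\lambda_m\le\Lambda}|\varphi_m(x)\varphi_m(y)|\lesssim\Lambda^{d}$) gives
\begin{equation*}
K_{X}(x,y)=\frac{\lambda_X}{2\pi}\int_{\R}\Phi\Bigl(\frac{\lambda_X t}{2\pi}\Bigr)\,\Bigl(\sum_{m}\cos(\lambda_m t)\,\varphi_m(x)\overline{\varphi_m(y)}\Bigr)\,dt,
\end{equation*}
so that $K_X$ is a rescaled average in $t$ of the even wave kernel $\cos(t\sqrt{\Delta})(x,y)$.

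Next I would fix $\varepsilon$ below half the injectivity radius of $\M$, take $\psi\in C^{\infty}_{c}(-\varepsilon,\varepsilon)$ equal to $1$ near $0$, and split the $t$-integral using $\psi$ and $1-\psi$. By finite propagation speed the wave kernel vanishes for $|t|<d(x,y)$, so when $d(x,y)\ge\varepsilon$ only the $1-\psi$ part survives -- and there the claimed main term is itself smaller than the asserted error, so it suffices to bound $K_X$. In either case the $1-\psi$ contribution equals $\sum_{m}\bigl(\int(1-\psi(t))\tfrac{\lambda_X}{2\pi}\Phi(\tfrac{\lambda_X t}{2\pi})\cos(\lambda_m t)\,dt\bigr)\varphi_m(x)\overline{\varphi_m(y)}$, and repeated integration by parts in $t$ (no boundary terms, since $1-\psi$ vanishes near $0$), together with the decay of $\Phi$ and its derivatives away from the origin and the Weyl bound, makes it $O(\lambda_X^{-M})$ for $M$ as large as the smoothness budget of $h$ allows -- dominated by the error, uniformly in $x,y$. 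For the $\psi$-part I would insert the Hadamard--Lax parametrix
\begin{equation*}
\cos(t\sqrt{\Delta})(x,y)=\sum_{\nu=0}^{N}\alpha_{\nu}(x,y)\,w_{\nu}\bigl(t,d(x,y)\bigr)+R_{N}(t,x,y),
\end{equation*}
valid for $|t|$ below the injectivity radius, in particular on the support of $\psi$: here the $w_{\nu}$ are the explicit Euclidean building blocks of the wave kernel (each $2\nu$ orders less singular than $w_{0}$), the $\alpha_{\nu}$ are the Hadamard coefficients -- smooth near the diagonal, with $\alpha_{0}(x,x)=1$ the reciprocal square root of the Van Vleck--Morette density, which we extend to a fixed nonnegative $\alpha_{0}\in C^{\infty}(\M\times\M)$ -- and $R_{N}$ is $C^{k}$ with $k\to\infty$ as $N\to\infty$. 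The $t$-integral of each summand reverses, term by term, the computation of the first paragraph and reproduces $\alpha_{\nu}(x,y)$ times $\lambda_X^{d-2\nu}$ times a Euclidean transform of $h$; the $\nu=0$ term yields exactly $\frac{\alpha_{0}(x,y)}{(2\pi)^{d}}\lambda_X^{d}\,\mathcal{F}_{d}h\bigl(\frac{\lambda_X d(x,y)}{2\pi}\bigr)$, the only discrepancy being that $\psi$ replaces the constant $1$ off the light cone $\{|t|=d(x,y)\}$, a region where $w_{0}$ is smooth and which is absorbed like the $|t|\ge\varepsilon$ tail above.

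It remains to bound the leftover pieces as errors: the terms with $\nu\ge1$ are $\lesssim|\alpha_{\nu}(x,y)|\,\lambda_X^{d-2\nu}\bigl(1+\lambda_X d(x,y)\bigr)^{-\sigma_{\nu}}$ and are dominated by $\nu=1$, giving $O(\lambda_X^{d-2})$; the parametrix remainder, being as smooth as we wish, contributes $O(\lambda_X^{d-2})$ for $N$ large; and the decay in $d(x,y)$ of every piece is inherited from $|\mathcal{F}_{d}h(t)|\le C(1+t)^{-2G}$, the loss of $2d+2$ units in the exponent -- hence the $\lfloor2G\rfloor-2d-2$ of the statement, positive precisely because $G>d+1$ -- accounting for the derivatives of the geometric data (the exponential map, the densities $\alpha_{\nu}$, the cutoff) that must be moved around when integrating by parts and undoing the Euclidean transform, and for passing from $L^{2}$ to pointwise control of those derivatives. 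I expect the main obstacle to be exactly this bookkeeping carried out \emph{uniformly} in $(x,y)$ with the advertised decay: one must bound enough derivatives of $\Phi$, equivalently of $\mathcal{F}_{d}h$, knowing only that $h\in L^{1}$ has compact support and a polynomially decaying transform, and then match the quantitative gains from integration by parts against the losses in the parametrix so that no logarithm and no spurious power of $\lambda_X$ appears.

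For part (ii) the construction is short. Fix the integer $G$, let $g$ be any $C^{\infty}$ nonnegative radial function supported in the ball of radius $1/2$ with $\|g\|_{2}\neq0$, and put $h:=\|g\|_{2}^{-2}\,g*g$. Then $h$ is radial and supported in the unit ball; $h\ge0$ since $g\ge0$; because $g$ is even, $h(0)=\|g\|_{2}^{-2}\int g^{2}=1$, and by Cauchy--Schwarz $h(x)=\|g\|_{2}^{-2}\int g(z)g(x-z)\,dz\le1$ for every $x$; and $\mathcal{F}_{d}h=\|g\|_{2}^{-2}(\mathcal{F}_{d}g)^{2}\ge0$, as $g$ real and even makes $\mathcal{F}_{d}g$ real. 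Since $\mathcal{F}_{d}g$ is Schwartz, $|\mathcal{F}_{d}g(t)|\le C_{G}(1+t)^{-G}$, whence $0\le\mathcal{F}_{d}h(t)\le C(1+t)^{-2G}$; and $h$ is not identically zero because $h(0)=1$. Thus $h$ satisfies all the requirements of (ii) and, in particular, the hypotheses of (i) with this $G$.
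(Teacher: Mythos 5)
Your part (ii) is correct and is exactly the paper's construction ($h=\psi\ast\psi$ suitably normalized), and your architecture for part (i) — transfer the multiplier to the wave propagator through the cosine transform of the radial profile, insert the Hadamard parametrix for small times, read the main term off the $\nu=0$ block and dump $\nu\ge1$ and the remainder into the error — is the right one. The genuine gap is in the step you yourself flag as the main obstacle: the disposal of the large-time ($1-\psi$) piece. Two concrete problems. First, decay of derivatives of $\mathcal F_dh$, hence of $\Phi$, does not ``follow likewise, with a loss'' from $|\mathcal F_dh(t)|\le C(1+t)^{-2G}$: pointwise decay of a function implies nothing about its derivatives, and the $H^s$/Cauchy--Schwarz remark does not by itself give the pointwise bounds your integrations by parts need. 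What is true — and what the paper invokes at the corresponding point — is that compact support of $h$ together with this decay forces $h\in\mathcal C^{K}(\R^d)$ with $K=\lfloor2G\rfloor-d-1$ (\cite[Theorem 1.7]{SW}), whence $\Phi^{(j)}=\mathcal F_1\bigl((-2\pi is)^{j}h_0\bigr)$ decays only like $(1+|u|)^{-K}$ and only for $j\le K$: the smoothness budget is finite and fixed by $G$. Second, the conclusion ``$O(\lambda_X^{-M})$ for $M$ as large as the budget allows'' is not right: each integration by parts in $t$ trades $\lambda_m^{-1}$ against a factor $\lambda_X$ from the dilation, i.e.\ it is a loss, not a gain, throughout the range $\lambda_m\lesssim\lambda_X$, which is exactly the range that dominates the spectral sum. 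One must bound the truncated multiplier crudely by $\lambda_X^{d-2G}$ for $\lambda_m\le 2\lambda_X$, integrate by parts only for $\lambda_m\ge2\lambda_X$, and sum with Weyl's bound $\|\varphi_m\|_\infty\lesssim(1+\lambda_m)^{(d-1)/2}$; what comes out is a fixed power, of order $\lambda_X^{3d-1-2G}$, not an arbitrarily small one. It does fit under the stated error, because that error at $d(x,y)\approx\diam\M$ is $\approx\lambda_X^{3d-\lfloor2G\rfloor}$ — but this is precisely the bookkeeping that produces the exponent $\lfloor2G\rfloor-2d-2$, and it is absent. (A smaller but real point: writing $\sum_m\cos(\lambda_mt)\varphi_m(x)\overline{\varphi_m(y)}$ inside the $t$-integral is only a distributional identity; the paper sidesteps this by pairing with test functions $f,g\in\mathcal D(\M)$ first and identifying the Schwartz kernel only at the end.)

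It is worth noting that the paper localizes to small times by a different device, which is why its proof closes more cleanly. Instead of a cutoff $\psi(t)$, it replaces $h(\cdot/\lambda_X)$ by $H=h(\cdot/\lambda_X)\ast\eta$, where $\eta$ is radial with $\mathcal F_d\eta$ supported in a small ball and $\equiv1$ near the origin; then $\mathcal C H$ is exactly supported in $[-\varepsilon,\varepsilon]$, the parametrix applies with no truncation error at all, and the transplantation formula converts every block into integrals of $\mathcal F_dH$. The price — the multiplier difference $I=h(\cdot/\lambda_X)-H$ — is estimated directly on the spectrum: Taylor expansion of $h$ (using $h\in\mathcal C^{K}$) plus the vanishing moments of $\eta$ give $|I(z)|\lesssim\lambda_X^{-K}$ for $|z|\le2\lambda_X$ and rapid decay beyond, and summing with Weyl yields the uniform $O(\lambda_X^{3d-\lfloor2G\rfloor})$ term. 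Your time-cutoff route can likely be repaired along the lines sketched above, but it requires exactly the derivative-decay and small-$\lambda_m$ analysis you omitted and yields nothing sharper than the paper's argument.
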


Point (ii) of Theorem \ref{main} is in fact trivial (see the proof at the end of Section \ref{proofs}). 
The function $h$ in point (ii) satisfies all the hypotheses of point (i).
Furthermore, with this choice of $h$, $K_X$ has all the properties we mentioned after equation \eqref{kappaX}, and non-negativity up to the remainder. In particular $h(\lambda_0/\lambda_X)=h(0)=1$, $K_X(x,x)=\lambda_X^d\mathcal F_dh(0)/(2\pi)^d +O(\lambda_{X}^{d-2})\approx X$ and $0\le h(\lambda_m/\lambda_X)\le h(0)=1$. 

Here we can observe that a kernel $K_X$ as in Theorem \ref{main} is in fact an approximation to the identity, when $G$ is sufficiently large.

\begin{corollary}\label{cor}
Let $h$ be an integrable radial function on $\R^d$, compactly supported in the ball centered at the origin and with radius $1$, with $h(0)=1$ and such that for $G>(3d+1)/2$, and for some positive constant $C$,
\[
\left|\mathcal F_dh(t)\right|\le C\frac 1{(1+t)^{2G}}.
\]
Then
\begin{align*}
K_X(x,y)&:=\sum_{m=0}^Xh\left(\frac{\lambda_m}{\lambda_X}\right)\varphi_m(x)\overline{\varphi_m(y)}
\end{align*}
is an approximation to the identity, in the sense that for all $x\in \mathcal M$ and for all $\delta>0$,
\begin{align*}
&\int_{\M}K_X(x,y)dy=1,\\
&\int_{\M}|K_X(x,y)|dy=\int_{\M}|K_X(y,x)|dy\le C,\\
&\lim_{X\to+\infty}\int_{\{y:d(x,y)\ge\delta\}}|K_X(x,y)|dy=0.
\end{align*}

\end{corollary}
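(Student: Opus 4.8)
The plan is to deduce all three statements from Theorem \ref{main}(i) by carefully exploiting the explicit main term and the quantitative remainder. The first identity is immediate: integrating $K_X(x,y)$ against $dy$ kills every term with $m\ge 1$ by orthogonality of the $\varphi_m$ with $\varphi_0\equiv 1$, and leaves $h(\lambda_0/\lambda_X)=h(0)=1$. This requires nothing from Theorem \ref{main}. The symmetry $\int_\M|K_X(x,y)|dy=\int_\M|K_X(y,x)|dy$ follows from $K_X(x,y)=\overline{K_X(y,x)}$, which is visible from the defining sum.

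For the uniform bound $\int_\M|K_X(x,y)|dy\le C$, I would plug in the asymptotic from Theorem \ref{main}(i) and estimate the two pieces separately. For the main term, $\alpha_0$ is bounded (it is continuous on the compact $\M\times\M$), so I need
\[
\lambda_X^d\int_\M\left|\mathcal F_dh\left(\frac{\lambda_X d(x,y)}{2\pi}\right)\right|dy\le C.
\]
Using the co-area formula / the fact that the Riemannian volume of a ball of radius $r$ is $\lesssim r^d$ for small $r$ (and $\le 1$ always), together with the decay $|\mathcal F_dh(t)|\le C(1+t)^{-2G}$, the substitution $r=d(x,y)$ reduces this to $\lambda_X^d\int_0^{\mathrm{diam}\,\M}(1+\lambda_X r/2\pi)^{-2G}r^{d-1}dr$, which after scaling $u=\lambda_X r$ is $\lesssim\int_0^\infty(1+u/2\pi)^{-2G}u^{d-1}du<\infty$ since $2G>d$. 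For the remainder term, the same computation gives
\[
\lambda_X^{d-2}\int_0^{\mathrm{diam}\,\M}\frac{r^{d-1}}{(1+\lambda_X r)^{\lfloor 2G\rfloor-2d-2}}dr
\lesssim\lambda_X^{-2}\int_0^\infty\frac{u^{d-1}}{(1+u)^{\lfloor 2G\rfloor-2d-2}}du,
\]
and this integral converges provided $\lfloor 2G\rfloor-2d-2>d$, i.e. $\lfloor 2G\rfloor>3d+2$; since $G>(3d+1)/2$ gives $2G>3d+1$, hence $\lfloor 2G\rfloor\ge 3d+2$, one needs to check the borderline case $\lfloor 2G\rfloor=3d+2$ separately (there the integral is only logarithmically divergent at infinity, but it is cut off at $u\approx\lambda_X\,\mathrm{diam}\,\M$, producing an extra $\log\lambda_X$ which is still killed by the $\lambda_X^{-2}$ prefactor). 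Either way the remainder contribution is $O(\lambda_X^{-2}\log\lambda_X)=o(1)$, hence bounded. I expect the bookkeeping of this borderline exponent to be the one place demanding care; it is the main obstacle, though a mild one.

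For the third, concentration statement, fix $\delta>0$ and restrict to $\{y:d(x,y)\ge\delta\}$. On this set $1+\lambda_X d(x,y)\ge 1+\lambda_X\delta$, so both the main term and the remainder are controlled by their values at $r=\delta$: the main term contributes $\lesssim\lambda_X^d(1+\lambda_X\delta)^{-2G}\cdot\mathrm{vol}(\M)\lesssim\lambda_X^{d}(\lambda_X\delta)^{-2G}\to 0$ since $2G>d$, and the remainder contributes $\lesssim\lambda_X^{d-2}(\lambda_X\delta)^{-(\lfloor 2G\rfloor-2d-2)}\to 0$ under the same exponent condition as above. More precisely, I would split the integral over $\{d(x,y)\ge\delta\}$ and use monotonicity of $t\mapsto(1+t)^{-\beta}$ to bound the integrand by its value at $t=\lambda_X\delta$ times the total measure $1$ of $\M$; letting $X\to\infty$ gives the limit $0$. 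This completes the proof. $\qed$
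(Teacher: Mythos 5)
Your proposal is correct and is exactly the route the paper takes: the paper's proof of Corollary \ref{cor} is the one-line observation that Theorem \ref{main}(i), with $G$ as stated, supplies the required integrability and decay, and you have simply filled in those details (orthogonality of $\varphi_m$ against $\varphi_0\equiv 1$ for the mean, boundedness of $\alpha_0$, the volume bound $|B(x,r)|\lesssim r^{d}$, and rescaling $u=\lambda_X r$). One small slip in your bookkeeping: $2G>3d+1$ only guarantees $\lfloor 2G\rfloor\ge 3d+1$, not $3d+2$, so the remainder exponent $\lfloor 2G\rfloor-2d-2$ may equal $d-1$, in which case the rescaled integral grows like $\lambda_X$ rather than logarithmically; the prefactor $\lambda_X^{-2}$ still makes that contribution $O(\lambda_X^{-1})$, so the stated conclusions are unaffected.
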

\begin{proof} It suffices to apply Theorem \ref{main} (i), with a sufficiently large $G$ to ensure the required integrability and decay.
\end{proof}
It follows by standard arguments that, for kernels as in Corollary \ref{cor}, the means 
\[
K_Xf(y):=\sum_{m=0}^{X}h\left(\frac{\lambda_m}{\lambda_X}\right)\widehat f(\lambda_m)\varphi_m(x)=\int_{\M}K_X(x,y)f(y)dy
\]
converge uniformly to $f(x)$ as $X\to+\infty$ whenever $f$ is continuous on $\M$, and in the $L^p$ norm whenever $f$ is in $L^p(\M)$, for $1\le p<+\infty$.

We have to mention here that Theorem \ref{main} is not entirely new and has been used already in several occasions by different authors (\cite{BC, BGG, CGT}). In \cite {BGG}, though, this result is not clearly stated, and its proof appears somehow mixed up with the result that the authors were actually proving, and for which they needed one such kernel.
In fact, essentially all the proofs of the theorems that we state here are already contained in \cite{BGG}. In \cite{CGT}, a vague statement is given, and for the proof the reader is referred to \cite{BC, T}. In \cite{BC} one can find a result, Theorem 2.3, which could be considered as one step of the proof of our result and that here corresponds somehow to our Theorem \ref{CB}. Our intent here is to give this result in the simplest and most transparent possible form, so that other authors can use it even if they do not master all the technicalities involved 
in the proof, like the Hadamard construction of the parametrix of the wave equation, see Section \ref{parametrix}. In Section \ref{proofs} we present all the steps needed to prove Theorem \ref{main}, and in the final Section \ref{application} we show how one can apply Theorem \ref{main} to give a direct proof of the main result of \cite{BGG}.

\section{Hadamard construction of the parametrix of the wave equation}\label{parametrix}

Let $\cos(\sqrt\Delta t)$ be the operator that associates to any function $f\in\mathcal D(\M)$
(smooth functions on $\M$), the solution $u\in\mathcal D'(\M)$ (distributions on $\M$) to the wave problem
\[
\begin{cases}
(\partial_t^2+\Delta)u(t,x)=0 & (t,x)\in \R\times\M\\
u(0,x)=f(x) & x\in\M\\
\partial_t u(0,x)=0 & x\in\M.
\end{cases}
\] 
It is easy to see that 
\[
(\cos(\sqrt\Delta t)f)(x)=\sum_{m=0}^{+\infty}\cos(t\lambda_m)\widehat f(\lambda_m)\varphi_m(x).
\]
Notice in particular that since $\widehat f(\lambda_m)$ decays rapidly and $\|\varphi_m\|_\infty$ has polynomial growth, $(\cos(\sqrt\Delta t)f)(x)$ is in fact a smooth function, and as a distribution acts on smooth functions by integration
\begin{align*}
\langle \cos(\sqrt\Delta t) f, g\rangle_{\mathcal D'(\M)}&=\int_{\M}\left(\sum_{m=0}^{+\infty}\cos(t\lambda_m)\widehat f(\lambda_m)\varphi_m(x)\right)g(x)dx\\
&=\sum_{m=0}^{+\infty}\cos(t\lambda_m)\widehat f(\lambda_m)\int_{\M}\varphi_m(x)g(x)dx.
\end{align*}
Observe now that for every $f,g\in\mathcal D(\M)$, the function $t\mapsto \langle \cos(\sqrt\Delta t) f, g\rangle_{\mathcal D'(\M)}$ is bounded and continuous, and this implies that it can be seen as a tempered distribution in $\mathcal S'(\R)$. It obviously acts on smooth, rapidly decaying functions $h\in\mathcal S(\R)$ by integration
\begin{align*}
&\left\langle\langle \cos(\sqrt\Delta t) f, g\rangle_{\mathcal D'(\M)},h\right\rangle_{\mathcal S'(\R)}=\int_{\R} \langle \cos(\sqrt\Delta t) f, g\rangle_{\mathcal D'(\M)}h(t)dt.\\
\end{align*}
In particular, notice that by the above formula, $\cos(\sqrt{\Delta}\cdot)$ can be seen as a (tempered) distribution on $\mathcal M\times\mathcal M\times\R$.

%

The following asymptotic expansion of the solution 
of the above mentioned wave problem is due to Hadamard, and its principal term is known as Hadamard parametrix.

\begin{theorem}
[{see {\cite[Theorem 3.1.5]{sogge}}}]\label{sogge}Given a $d$-dimensional
Riemannian manifold $\left(  \mathcal{M},g\right)  $, there exists
$\varepsilon>0$ and functions $\alpha_{\nu}\in\mathcal{C}^{\infty}%
(\mathcal{M}\times\mathcal{M})$, so that if $Q>d+3$ then for every $f\in\mathcal  D(\M)$
\begin{align}\label{parametrix}
\left(\cos(  t\sqrt{\Delta})f\right)(x)&=\int_{\M}
\sum_{\nu=0}^{Q}\alpha_{\nu}(  x,y)
\partial_{t}(  E_{\nu}-\check{E}_{\nu})  (  t,d(
x,y)  )f(y)dy
+\int_{\M}R_{Q}(  t,x,y)f(y)dy
\end{align}%
where $R_{Q}\in\mathcal{C}^{Q-d-3}(  [  -\varepsilon,\varepsilon
]  \times\mathcal{M}\times\mathcal{M})  $ and%
\[
\left\vert \partial_{t,x,y}^{\beta}R_{Q}(  t,x,y)  \right\vert \leq
C\left\vert t\right\vert ^{2Q+2-d-\left\vert \beta\right\vert }.
\]
Furthermore $\alpha_{0}(  x,y)  \ge0$ in $\mathcal{M}\times
\mathcal{M}$, and $\alpha_0(x,x)=1$.
\end{theorem}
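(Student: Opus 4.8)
This is a classical result going back to Hadamard; full details are in the cited reference, but the shape of the argument is as follows. The plan is to build, in a fixed neighbourhood of the diagonal of $\M\times\M$ and for $\left|t\right|$ small, an approximate kernel for the cosine propagator by freezing the corresponding flat model and correcting it with smooth amplitudes. Let $E_\nu$ be the forward fundamental solutions on $\R\times\R^d$ of the iterated wave operator: the distributions supported in the solid light cone $\{|t|\ge|x|\}$ with $(\partial_t^2+\Delta_{\R^d})E_0=\delta_{(0,0)}$ and $(\partial_t^2+\Delta_{\R^d})E_\nu=E_{\nu-1}$ for $\nu\ge1$. Each $E_\nu$ is radial in the space variable, is homogeneous of degree $2\nu-d+1$, equals $c_\nu(t^2-|x|^2)_+^{\nu-(d-1)/2}$ for $\nu$ large, and is therefore, for $\nu$ large enough, a genuine function, of smoothness growing linearly in $\nu$ and of size $\lesssim|t|^{2\nu-d+1}$ near the origin; moreover $\partial_r E_\nu$ is a multiple of $r\,E_{\nu-1}$. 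Set $\check E_\nu(t,\cdot)=E_\nu(-t,\cdot)$, so that $\partial_t(E_\nu-\check E_\nu)$ is even in $t$, as the cosine propagator must be, and so that at $t=0$ the expressions $\partial_t(E_0-\check E_0)$ and $\partial_t^2(E_0-\check E_0)$ produce, respectively, a multiple of $\delta_0$ and $0$, matching the Cauchy data of the problem.

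First I would substitute $|x|$ by the Riemannian distance $d(x,y)$ and look for the kernel in the form $\sum_{\nu=0}^{Q}\alpha_\nu(x,y)\,\partial_t(E_\nu-\check E_\nu)(t,d(x,y))$. Applying $\partial_t^2+\Delta_x$ to a single summand and using the chain rule together with the eikonal identity $|\nabla_x d(x,y)|^2\equiv1$ (valid as long as $x$ stays in the geodesic normal chart centred at $y$, away from the diagonal) and the relation $\partial_r E_\nu\propto r\,E_{\nu-1}$, one sees that every term reorganises into a contribution to $E_{\nu-1}$, to $E_\nu$, or, for $\nu=0$, to the Cauchy datum. Collecting, in the full sum, the coefficient of each $E_\nu$ and demanding that the kernel solve the wave equation modulo a single $E_Q$-term, one is led to the Hadamard hierarchy: a homogeneous first-order linear transport equation along the radial geodesics from $y$ for $\alpha_0$, and inhomogeneous ones, with the same principal part and with right-hand side built from $\Delta_x\alpha_{\nu-1}$, for $\alpha_1,\dots,\alpha_Q$. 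The leading equation is a linear ODE along each geodesic whose integrating factor is an explicit power of the Riemannian volume density in normal coordinates based at $y$; the normalisation forced by matching the $\delta$-singularity gives $\alpha_0(x,y)=\Theta(x,y)^{-1/2}$, where $\Theta>0$ is that density, normalised by $\Theta(y,y)=1$, so that $\alpha_0>0$ and $\alpha_0(x,x)=1$. The remaining $\alpha_\nu$ are then produced, one after another, by integrating their transport equations, and are smooth wherever $d(x,y)$ is.

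A priori the $\alpha_\nu$ are defined only where $d(x,y)$ lies below the injectivity radius; but by finite propagation speed the summand $\alpha_\nu(x,y)\,\partial_t(E_\nu-\check E_\nu)(t,d(x,y))$ is supported in $\{d(x,y)\le|t|\}$, so as long as $|t|<\varepsilon$ with $\varepsilon$ below the injectivity radius one may, without altering the formula, multiply each $\alpha_\nu$ by a fixed cutoff equal to $1$ on $\{d(x,y)<\varepsilon\}$, thereby arranging $\alpha_\nu\in\mathcal C^\infty(\M\times\M)$. One then defines $R_Q$ to be the difference between the true kernel $\sum_m\cos(t\lambda_m)\varphi_m(x)\overline{\varphi_m(y)}$ of $\cos(t\sqrt\Delta)$ (which exists by the spectral theorem) and the parametrix sum just built. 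By construction $R_Q$ has vanishing Cauchy data at $t=0$ and solves the wave equation with forcing equal to a constant times $(\Delta_x\alpha_Q)(x,y)(E_Q-\check E_Q)(t,d(x,y))$, which for $Q>d+3$ is a genuine function of a definite finite smoothness and of size $\lesssim|t|^{2Q-d+1}$. Solving this forced problem by Duhamel's formula and the energy inequality for the wave equation, then inserting the smoothness and the power of $|t|$ carried by the forcing (one time integration supplies the extra power), and finally using Sobolev embedding, one arrives at the stated bounds $|\partial_{t,x,y}^\beta R_Q|\le C|t|^{2Q+2-d-|\beta|}$ and $R_Q\in\mathcal C^{Q-d-3}$.

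The main obstacle is precisely this last step: keeping exact track of how much smoothness and how many powers of $t$ survive the passage from the explicit but only fractionally smooth flat kernels $E_\nu$, through the curved substitution of $d(x,y)$ for $|x|$ (whose higher derivatives near the diagonal must be controlled, and which are only available in a geodesically convex neighbourhood), and through one integration of the wave flow, so as to land on the precise exponents in the statement rather than on something weaker. By contrast the positivity $\alpha_0\ge0$ and the normalisation $\alpha_0(x,x)=1$ drop out immediately from the explicit solution of the first transport equation, and the passage from the local construction to globally smooth amplitudes $\alpha_\nu$ costs nothing, thanks to finite propagation speed.
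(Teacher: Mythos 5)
Your sketch is correct in outline, and it follows the same route as the source the paper relies on: the paper does not prove this theorem at all, but quotes it from Sogge's Hangzhou Lectures (Theorem 3.1.5), whose proof is exactly the Hadamard construction you describe --- transport equations along radial geodesics giving $\alpha_0=\Theta^{-1/2}$ (hence $\alpha_0\ge 0$, $\alpha_0(x,x)=1$), globalization of the $\alpha_\nu$ via finite propagation speed, and Duhamel plus energy estimates for the remainder $R_Q$. The only discrepancy is a harmless normalization: the paper's convention is $(\partial_t^2+\Delta)E_\nu=\nu E_{\nu-1}$, whereas you drop the factor $\nu$, which corresponds to a different scaling of $E_\nu$ and does not affect the argument.
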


Here we only want to recall that $E_\nu$ is a homogeneous distribution of degree $2\nu-d+1$ supported on the forward light cone $\{(t,x)\in\mathbb R^{1+d}:t\ge0, t^2\ge|x|^2\}$, radial in $x$, and defined by
\[
E_\nu(t,x)=\lim_{\varepsilon\to0+}\nu!(2\pi)^{-d-1}
\iint_{\mathbb R^{1+d}}e^{i(x\cdot\xi+tr)}(|\xi|^2-(\tau-i\varepsilon)^2)^{-\nu-1}d\xi d\tau.
\]
The distribution $\check E_\nu$ is the reflection of $E_\nu$
about the origin of $\mathbb R^{1+d}$. The distribution $E_0$ is the fundamental solution of the wave operator supported on the forward light cone, and for all $\nu=1,2,\ldots,$ the distributions $E_\nu$ are defined in such a way that $(\partial_t^2+\Delta)E_\nu=\nu E_{\nu-1}$. With a small abuse of notation, we shall sometimes write 
$\partial_{t}(  E_{\nu}-\check{E}_{\nu})  (  t,|z|)$
instead of $\partial_{t}(  E_{\nu}-\check{E}_{\nu})  (  t,z)$.
Formula \eqref{parametrix} has then to be interpreted in local coordinates (more precisely, normal coordinates in a  coordinate patch centered at $x\in\mathcal M$), whenever the time $t$ is smaller than the injectivity radius.

Finally, the distributions $\partial_{t}(  E_{\nu}-\check{E}_{\nu})  (  t,z)$ can be regarded as continuous radial functions of $z$ with values in $\mathcal S'(\mathbb R)$. Furthermore, when $0\le\nu<d/2$,
for every $z\in\mathbb{R}^{d}$  the inverse cosine transform 
$\mathcal{C}^{-1}\left(  \partial_{\cdot}(  E_{\nu}-\check{E}_{\nu
}) (  \cdot,z)  \right)  $ is a function and for all
$t\in\mathbb{R}$%
\begin{equation}
\mathcal{C}^{-1}\left(  \partial_{\cdot}(  E_{\nu}-\check{E}_{\nu
})  (  \cdot,z)  \right) (  t)  =\pi
^{-d/2}2^{-\nu-d/2-1}\vert t\vert ^{-2\nu-1+d}\frac{J_{-\nu
+d/2-1}(  t\vert z\vert )  }{(  t\vert
z\vert )  ^{-\nu+d/2-1}}, \label{identity}%
\end{equation}
whereas when $d/2\le\nu$, for every $z\in\mathbb{R}^{d}$ the distribution itself $\partial_{t}(  E_{\nu
}-\check{E}_{\nu}) (  t,z )  $ can be
identified with the locally integrable function%
\[
t\mapsto C_{\nu}|t|(  t^{2}-|z|  ^{2})  _{+}^{\nu-1+(
1-d)  /2},
\]
with $C_{\nu}=2^{-2\nu}\pi^{(1-d)/2}\left(1+\frac{1-d}{2} \right)$.

\section{Analysis of the kernel}\label{proofs}
Let us now take an even continuous function $H\in L^1(\R)$, and assume that its cosine transform $\mathcal C H(s)$ is compactly supported. Then
for every $s\in\R$,
\[
H(s)=\int_{\R}\mathcal C^{-1}H(t)\cos(st)dt.
\]

Consider the operator $\mathcal H$ that maps any function $f\in\mathcal D(\M)$ to the distribution
\[
\mathcal Hf=\sum_{m=0}^{+\infty}H(\lambda_m)\widehat f(\lambda_m)\varphi_m.
\]
Since $\mathcal H f$ is in fact a smooth function,
it acts on $\mathcal D(\M)$ by integration,
\begin{align}\label{pairing}
&\langle \mathcal H f,g\rangle_{\mathcal D'(\M)}
=\int_{\M} \left(\sum_{m=0}^{+\infty}H(\lambda_m)\widehat f(\lambda_m)\varphi_m(x)\right)g(x)dx\nonumber\\
&=\sum_{m=0}^{+\infty}H(\lambda_m)\widehat f(\lambda_m)\int_{\M}g(x) \varphi_m(x)dx
=\sum_{m=0}^{+\infty}\left(\int_{\R}\mathcal C^{-1}H(t)\cos(\lambda_mt)dt\right)\widehat f(\lambda_m)\int_{\M}g(x) \varphi_m(x)dx\nonumber\\
&=\int_{\R}\mathcal C^{-1}H(t)\sum_{m=0}^{+\infty}\cos(\lambda_mt)\widehat f(\lambda_m)\left(\int_{\M}g(x) \varphi_m(x)dx\right)dt
=\int_{\R}\mathcal C^{-1}H(t)\langle \cos(\sqrt\Delta t) f, g\rangle_{\mathcal D'(\M)}dt.
\end{align}

\begin{theorem}\label{teo3}
Let $H\in L^1(\R)$ be even and continuous, and assume that its cosine transform $\mathcal C H(s)=\int_{\R} H(t)\cos(st)dt$ is supported in $[-\varepsilon,\varepsilon]$.
Let $\mathcal H$ be the operator that maps any function $f\in\mathcal D(\M)$ to the distribution
$
\mathcal Hf=\sum_{m=0}^{+\infty}H(\lambda_m)\widehat f(\lambda_m)\varphi_m.
$
Then, for any $f,g\in\mathcal D(\mathcal M)$,
\begin{align*}
&\langle \mathcal H f,g\rangle_{\mathcal D'(\M)}\\
&=\sum_{0\le\nu<d/2}
\int_{\mathcal M}\int_{\mathcal M}\alpha_\nu(x,y)\int_{0}^{+\infty}H(t) \pi
^{-d/2}2^{-\nu-d/2} t ^{-2\nu-1+d}\frac{J_{-\nu
+d/2-1}\left(  t d(x, y) \right)  }{\left(  td(x,y) \right)  ^{-\nu+d/2-1}}dt g(x)dxf(y)dy
\\
&+\sum_{d/2\le \nu\le Q}
C_{\nu}\int_{\mathcal M}\int_{\mathcal M}\alpha_\nu(x,y)\int_{-\varepsilon}^{\varepsilon}\mathcal C^{-1}H(t)
|t|\left(  t^{2}-d(x,y) ^{2}\right)  _{+}^{\nu-1+\left(
1-d\right)  /2}g(x)dx f(y)dy\\
&+
\int_{\mathcal M} \int_{\mathcal M}\int_{-\varepsilon}^{\varepsilon}\mathcal C^{-1}H(t)R_Q(t,x,y)dtg(x) f(y)dxdy.
\end{align*}
\end{theorem}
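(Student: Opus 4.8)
The plan is to start from identity \eqref{pairing}, which already rewrites $\langle\mathcal H f,g\rangle$ as $\int_{\mathbb R}\mathcal C^{-1}H(t)\,\langle\cos(\sqrt\Delta t)f,g\rangle_{\mathcal D'(\mathcal M)}\,dt$. Since $\mathcal C^{-1}H=\frac{1}{2\pi}\mathcal C H$ is supported in $[-\varepsilon,\varepsilon]$ (the $\varepsilon$ of Theorem \ref{sogge}, taken below the injectivity radius), this integral is really over $[-\varepsilon,\varepsilon]$, where the Hadamard expansion \eqref{parametrix} applies once $Q>d+3$. Pairing \eqref{parametrix} against $g$, substituting, and exchanging the $t$-integration with the integrations over $\mathcal M\times\mathcal M$, one obtains
\begin{align*}
\langle\mathcal H f,g\rangle&=\sum_{\nu=0}^{Q}\int_{\mathcal M}\int_{\mathcal M}\alpha_\nu(x,y)\left(\int_{-\varepsilon}^{\varepsilon}\mathcal C^{-1}H(t)\,\partial_t(E_\nu-\check E_\nu)(t,d(x,y))\,dt\right)g(x)\,dx\,f(y)\,dy\\
&\quad+\int_{\mathcal M}\int_{\mathcal M}\left(\int_{-\varepsilon}^{\varepsilon}\mathcal C^{-1}H(t)R_Q(t,x,y)\,dt\right)g(x)\,dx\,f(y)\,dy,
\end{align*}
whose last line is already the third term in the statement. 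It then remains to evaluate the inner $t$-integral against each $\partial_t(E_\nu-\check E_\nu)(\cdot,z)$, $z=d(x,y)$, distinguishing $\nu\ge d/2$ from $\nu<d/2$.

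When $d/2\le\nu\le Q$, the distribution $\partial_t(E_\nu-\check E_\nu)(\cdot,z)$ is, as recalled at the end of Section \ref{parametrix}, the locally integrable function $t\mapsto C_\nu|t|(t^2-|z|^2)_+^{\nu-1+(1-d)/2}$, and inserting it verbatim produces exactly the second sum in the statement. When $0\le\nu<d/2$ the distribution is no longer a function, but by \eqref{identity} its inverse cosine transform is the explicit function
\[
\psi_\nu(t,z)=\pi^{-d/2}2^{-\nu-d/2-1}|t|^{-2\nu-1+d}\,\frac{J_{-\nu+d/2-1}(t|z|)}{(t|z|)^{-\nu+d/2-1}} .
\]
Hence $\partial_\cdot(E_\nu-\check E_\nu)(\cdot,z)=\mathcal C\psi_\nu(\cdot,z)$ (using $\mathcal C\mathcal C^{-1}=\mathrm{id}$ on even tempered distributions), and the self-adjointness $\int\mathcal C u\cdot v=\int u\cdot\mathcal C v$ of the cosine transform gives
\begin{align*}
\int_{\mathbb R}\mathcal C^{-1}H(t)\,\partial_t(E_\nu-\check E_\nu)(t,z)\,dt&=\langle\mathcal C\psi_\nu(\cdot,z),\mathcal C^{-1}H\rangle=\langle\psi_\nu(\cdot,z),\mathcal C(\mathcal C^{-1}H)\rangle\\
&=\langle\psi_\nu(\cdot,z),H\rangle=\int_{\mathbb R}\psi_\nu(t,z)H(t)\,dt=2\int_0^{+\infty}\psi_\nu(t,z)H(t)\,dt,
\end{align*}
the last step because $\psi_\nu(\cdot,z)$ and $H$ are even; absorbing the factor $2$ into $2^{-\nu-d/2-1}$ turns this into precisely the first sum in the statement.

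The main obstacle is justifying the two interchanges above, namely (i) pulling $\int_{-\varepsilon}^{\varepsilon}\mathcal C^{-1}H(t)\,dt$ through the $\mathcal M\times\mathcal M$ integration when, for small $\nu$, $\partial_t(E_\nu-\check E_\nu)$ is a genuinely singular distribution rather than a function; and (ii) applying the cosine Parseval identity to the pair $(\psi_\nu(\cdot,z),\mathcal C^{-1}H)$, neither member of which is a Schwartz function ($\mathcal C^{-1}H$ is merely continuous with support in $[-\varepsilon,\varepsilon]$, and $\psi_\nu(\cdot,z)$ grows polynomially at infinity). I would handle both by a regularization argument: approximate $H$ — equivalently $\mathcal C^{-1}H$ — in an appropriate sense by Schwartz functions, carry out all the manipulations for the smooth approximants, and pass to the limit, using that the forward--light--cone support of $E_\nu-\check E_\nu$ confines the $t$-integral to $d(x,y)\le|t|\le\varepsilon$ (so there is no contribution from large $t$) and that the boundedness and regularity of $H$ (which follow from $\mathcal C H\in L^1$ with compact support) control the passage to the limit. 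One also has to check that the outer integrals over $\mathcal M\times\mathcal M$ converge: near the diagonal the factors $J_{-\nu+d/2-1}(t|z|)/(t|z|)^{-\nu+d/2-1}$ and $(t^2-|z|^2)_+^{\nu-1+(1-d)/2}$ are integrable because the relevant exponents exceed $-1$, while away from the diagonal everything is smooth and $R_Q\in\mathcal C^{Q-d-3}$.
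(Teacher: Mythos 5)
Your proposal is correct and follows essentially the same route as the paper's own proof: apply \eqref{pairing}, insert the Hadamard expansion of Theorem \ref{sogge} on $[-\varepsilon,\varepsilon]$, exchange the $t$-integral with the $\mathcal M\times\mathcal M$ pairings, then for $d/2\le\nu\le Q$ substitute the locally integrable function $C_\nu|t|(t^2-d(x,y)^2)_+^{\nu-1+(1-d)/2}$ and for $0\le\nu<d/2$ use \eqref{identity} together with the duality of the cosine transform to replace $\langle\partial_t(E_\nu-\check E_\nu),\mathcal C^{-1}H\rangle$ by $\int_{\R}H\,\mathcal C^{-1}(\partial_t(E_\nu-\check E_\nu))\,dt$, with evenness converting the constant $2^{-\nu-d/2-1}$ into $2^{-\nu-d/2}$. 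The only difference is cosmetic (you phrase the Parseval step via self-adjointness of $\mathcal C$ and add remarks on justifying the interchanges, which the paper performs without comment), so there is nothing to correct.
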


\begin{proof} 
It follows from \eqref{pairing} and Theorem \ref{sogge}, that
if $\mathcal C^{-1}H$ is supported in $[-\varepsilon,\varepsilon]$ then
\begin{align*}
&\langle \mathcal H f,g\rangle_{\mathcal D'(\M)}
=\int_{-\varepsilon}^{\varepsilon}\mathcal C^{-1}H(t)\langle \cos(\sqrt\Delta t) f, g\rangle_{\mathcal D'(\M)}dt\\
&=\sum_{\nu=0}^Q\int_{-\varepsilon}^{\varepsilon}\mathcal C^{-1}H(t)\langle \int_{\mathcal M}\alpha_\nu(\cdot,y)\partial_t(E_\nu-\check E_\nu)(t,d(\cdot,y)) f(y)dy, g\rangle_{\mathcal D'(\M)}dt\\
&+\int_{-\varepsilon}^{\varepsilon}\mathcal C^{-1}H(t)\langle \int_{\mathcal M}R_Q(t,\cdot,y) f(y)dy, g\rangle_{\mathcal D'(\M)}dt\\
&=\sum_{\nu=0}^Q\int_{-\varepsilon}^{\varepsilon}\mathcal C^{-1}H(t)\int_{\mathcal M}\langle \alpha_\nu(\cdot,y)\partial_t(E_\nu-\check E_\nu)(t,d(\cdot,y)),g\rangle_{\mathcal D'(\M)} f(y)dydt\\
&+\int_{-\varepsilon}^{\varepsilon}\mathcal C^{-1}H(t)\int_{\mathcal M} \int_{\mathcal M}R_Q(t,x,y)g(x)dx f(y)dy dt\\
&=\sum_{\nu=0}^Q\int_{\mathcal M}\int_{-\varepsilon}^{\varepsilon}\mathcal C^{-1}H(t)\langle 
\alpha_\nu(\cdot,y)\partial_t(E_\nu-\check E_\nu)(t,d(\cdot,y)),g\rangle_{\mathcal D'(\M)} f(y)dtdy\\
&+\int_{\mathcal M} \int_{\mathcal M}\int_{-\varepsilon}^{\varepsilon}\mathcal C^{-1}H(t)R_Q(t,x,y)dtg(x) f(y)dxdy \\
&=\sum_{\nu=0}^Q\int_{\mathcal M}\langle \alpha_\nu(\cdot,y)\int_{-\varepsilon}^{\varepsilon}\mathcal C^{-1}H(t)
\partial_t(E_\nu-\check E_\nu)(t,d(\cdot,y))dt,g\rangle_{\mathcal D'(\M)} f(y)dy\\
&+\int_{\mathcal M} \int_{\mathcal M}\int_{-\varepsilon}^{\varepsilon}\mathcal C^{-1}H(t)R_Q(t,x,y)dtg(x) f(y)dxdy.
\end{align*}

Let us now look closely to each of the terms of the above sum. If $0\le\nu<d/2$, then

\begin{align*}
&\int_{\mathcal M}\left\langle \alpha_\nu(\cdot,y)\int_{-\varepsilon}^{\varepsilon}\mathcal C^{-1}H(t)
\partial_t(E_\nu-\check E_\nu)(t,d(\cdot,y))dt,g\right\rangle_{\mathcal D'(\M)} f(y)dy\\
&=\int_{\mathcal M}\left\langle \alpha_\nu(\cdot,y)\int_{-\infty}^{+\infty}H(t) \mathcal C^{-1}
\left(\partial_t(E_\nu-\check E_\nu)(t,d(\cdot,y))\right)dt,g\right\rangle_{\mathcal D'(\M)} f(y)dy\\
&=\int_{\mathcal M}\left\langle \alpha_\nu(\cdot,y)\int_{-\infty}^{+\infty}H(t) \pi
^{-d/2}2^{-\nu-d/2-1}\left\vert t\right\vert ^{-2\nu-1+d}\frac{J_{-\nu
+d/2-1}\left(  t d(\cdot, y) \right)  }{\left(  t d(\cdot, y) \right)  ^{-\nu+d/2-1}}dt,g\right\rangle_{\mathcal D'(\M)} f(y)dy\\
\end{align*}
and since now for any $y$, the distribution acting on $g$ is a locally integrable function, it acts by integration, thus obtaining
\[
\int_{\mathcal M}\int_{\mathcal M}\alpha_\nu(x,y)\int_{0}^{+\infty}H(t) \pi
^{-d/2}2^{-\nu-d/2} t ^{-2\nu-1+d}\frac{J_{-\nu
+d/2-1}\left(  t d(x, y) \right)  }{\left(  td(x,y) \right)  ^{-\nu+d/2-1}}dt g(x)dxf(y)dy.
\]
If instead $\nu\ge d/2$, then 
\begin{align*}
&\int_{\mathcal M}\langle \alpha_\nu(\cdot,y)\int_{-\varepsilon}^{\varepsilon}\mathcal C^{-1}H(t)
\partial_t(E_\nu-\check E_\nu)(t,d(\cdot,y))dt,g\rangle_{\mathcal D'(\M)} f(y)dy\\
&
=C_{\nu}\int_{\mathcal M}\langle \alpha_\nu(\cdot,y)\int_{-\varepsilon}^{\varepsilon}\mathcal C^{-1}H(t)
|t|\left(  t^{2}-d(\cdot,y) ^{2}\right)  _{+}^{\nu-1+\left(
1-d\right)  /2},g\rangle_{\mathcal D'(\M)} f(y)dy.
\end{align*}
Again, for any $y$, the distribution acting on $g$ is a locally integrable function, so that the above term equals
\[
C_{\nu}\int_{\mathcal M}\int_{\mathcal M}\alpha_\nu(x,y)\int_{-\varepsilon}^{\varepsilon}\mathcal C^{-1}H(t)
|t|\left(  t^{2}-d(x,y) ^{2}\right)  _{+}^{\nu-1+\left(
1-d\right)  /2}g(x)dx f(y)dy.
\]
\end{proof}

The formula in Theorem \ref{teo3} also gives an explicit expression of the Schwartz kernel of $\mathcal H$. In particular, it is a function.

\begin{theorem}
\label{CB}
Let $H\in L^1(\R)$ be even and continuous, and assume that its cosine transform $\mathcal C H(s)=\int_{\R} H(t)\cos(st)dt$ is supported in $[-\varepsilon,\varepsilon]$.
Then
\begin{align*}
\sum_{m=0}^{+\infty}H(\lambda_m)\varphi_m(x)\overline{\varphi_m(y)}
&=\sum_{0\le\nu<d/2}
\alpha_\nu(x,y)\int_{0}^{+\infty}H(t) \pi
^{-d/2}2^{-\nu-d/2} t ^{-2\nu-1+d}\frac{J_{-\nu
+d/2-1}\left(  t d(x, y) \right)  }{\left(  td(x,y) \right)  ^{-\nu+d/2-1}}dt 
\\
&+\sum_{d/2\le \nu\le Q}
C_{\nu}\alpha_\nu(x,y)\int_{-\varepsilon}^{\varepsilon}\mathcal C^{-1}H(t)
|t|\left(  t^{2}-d(x,y) ^{2}\right)  _{+}^{\nu-1+\left(
1-d\right)  /2}dt\\
&+
\int_{-\varepsilon}^{\varepsilon}\mathcal C^{-1}H(t)R_Q(t,x,y)dt.
\end{align*}
\end{theorem}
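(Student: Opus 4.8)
The plan is to identify the right-hand side of the asserted identity with the Schwartz kernel of $\mathcal H$, reading the latter off from Theorem \ref{teo3}. First I would observe that for $f\in\mathcal D(\M)$ the coefficients $\widehat f(\lambda_m)$ decay faster than any power of $1+\lambda_m$, that $|H|$ is bounded (being the cosine transform of $\mathcal C H$, which is continuous, compactly supported, hence integrable), and that $\|\varphi_m\|_\infty$ grows at most polynomially; consequently $\mathcal Hf=\sum_m H(\lambda_m)\widehat f(\lambda_m)\varphi_m$ converges uniformly to a smooth function, and, arguing as in \eqref{pairing} and inserting $\widehat f(\lambda_m)=\int_{\M}f(y)\overline{\varphi_m(y)}\,dy$,
\[
\langle\mathcal Hf,g\rangle_{\mathcal D'(\M)}=\sum_{m=0}^{+\infty}H(\lambda_m)\Big(\int_{\M}g(x)\varphi_m(x)\,dx\Big)\Big(\int_{\M}f(y)\overline{\varphi_m(y)}\,dy\Big)
\]
for every $g\in\mathcal D(\M)$.

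Next I would show that the partial sums $S_N(x,y)=\sum_{m\le N}H(\lambda_m)\varphi_m(x)\overline{\varphi_m(y)}$ converge in $\mathcal D'(\M\times\M)$: testing against $\Phi\in\mathcal D(\M\times\M)$ and integrating by parts repeatedly (using $\Delta\varphi_m=\lambda_m^2\varphi_m$ and the self-adjointness of $\Delta$) gives $|\langle\Phi,\varphi_m\otimes\overline{\varphi_m}\rangle|\lesssim_N(1+\lambda_m)^{-N}$ for every $N$, so that $\sum_m|H(\lambda_m)|\,|\langle\Phi,\varphi_m\otimes\overline{\varphi_m}\rangle|<\infty$ and $S_N\to S$ in $\mathcal D'(\M\times\M)$ for some distribution $S$. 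Evaluating on product test functions and comparing with the display above yields $\langle S,g\otimes f\rangle=\langle\mathcal Hf,g\rangle$ for all $f,g\in\mathcal D(\M)$, while Theorem \ref{teo3} asserts that the same quantity equals $\langle\mathcal K,g\otimes f\rangle$, where $\mathcal K(x,y)$ denotes precisely the right-hand side of the identity to be proved, regarded as a function on $\M\times\M$. Since a distribution on $\M\times\M$ that annihilates every product $g(x)f(y)$ must vanish, $S=\mathcal K$, which is the claim.

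It then only remains to confirm that $\mathcal K$ is indeed a locally integrable function — so that the left-hand side is represented by it and is not merely a formal symbol: $\alpha_\nu$ is smooth and $R_Q$ continuous, the factor $|t|(t^2-d(x,y)^2)_+^{\nu-1+(1-d)/2}$ is locally integrable in $t$ when $\nu\ge d/2$, and for $0\le\nu<d/2$ the weight $t^{-2\nu-1+d}$ is integrable near $t=0$ exactly because $\nu<d/2$, while convergence of the $t$-integral at $+\infty$ comes from rewriting $H=\mathcal C^{-1}(\mathcal C H)$ with $\mathcal C H$ supported in $[-\varepsilon,\varepsilon]$ and transferring the oscillation onto the Bessel factor, as in \eqref{identity}. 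I expect these last two points — the $\mathcal D'(\M\times\M)$-convergence of the partial sums $S_N$, and the convergence of the $0\le\nu<d/2$ integrals — to be the only genuinely delicate steps; everything else, in particular the case split $\nu<d/2$ versus $\nu\ge d/2$, the identity \eqref{identity}, and the pairing \eqref{pairing}, is bookkeeping inherited directly from Theorem \ref{teo3}.
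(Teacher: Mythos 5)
Your argument is essentially the paper's own proof: both identify the spectral sum and the right-hand side of Theorem \ref{teo3} as the Schwartz kernel of $\mathcal H$, tested against products $g\otimes f$, and conclude they coincide. You merely make explicit the $\mathcal D'(\M\times\M)$-convergence of the partial sums and the totality of product test functions, details the paper leaves implicit, so the proposal is correct and follows the same route.
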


\begin{proof} Since 
\[
\langle \mathcal H f,g\rangle_{\mathcal D'(\M)}
=\int_{\M}\int_{\M}\sum_{m=0}^{+\infty}H(\lambda_m)\varphi_m(x)\overline{\varphi_m(y)}g(x) f(y)dxdy,
\]
it follows that the kernel can also be written as the function
\[
\sum_{m=0}^{+\infty}H(\lambda_m)\varphi_m(x)\overline{\varphi_m(y)}.
\]
By Theorem \ref{teo3}, one has the thesis.
\end{proof}

For smooth radial integrable functions on $\mathbb{R}^{d}$, $f\left(  x\right)  =f_{0}\left(  \left\vert x\right\vert
\right) $, the Fourier transform 
\begin{align*}
\mathcal{F}_{d}f\left(  \xi\right)   &  =\int_{\mathbb{R}^{d}}f\left(
x\right)  e^{-2\pi ix\cdot\xi}dx
\end{align*}
reduces essentially to the Hankel
transform, given by (see \cite[Chapter 4, Theorem 3.3]{SW})
\begin{align}
\mathcal{F}_{d}f(\xi)   &  =2\pi\vert \xi\vert
^{-\frac{d-2}{2}}\int_{0}^{\infty}f_{0}(s)  J_{\frac{d-2}{2}}(  2\pi\vert \xi\vert s)  s^{\frac{d}{2}}ds.\label{Hankel}
\end{align}
As we mentioned before, with an abuse of notation, we will identify the function $f$
with its radial profile $f_{0}$ and write $\mathcal{F}_{d}f(\vert\xi\vert)  $ instead of $\mathcal{F}_{d}f(\xi).$
One can easily show that if $f$ is an even smooth function on $\R$, then for any $t\in\R$,
\[
\mathcal{C}^{-1}f(t)=\frac 1{2\pi}\mathcal{C}f(t)=\frac1{2\pi}\mathcal{F}_{1}f\left( \frac {|t|}{2\pi}\right).
\]
With this notation, our kernel can be rewritten as
\begin{align}\label{formula}
\sum_{m=0}^{+\infty}H(\lambda_m)\varphi_m(x)\overline{\varphi_m(y)}
&=\sum_{0\le\nu<d/2}
\alpha_\nu(x,y)\frac{\pi^\nu}{(2\pi)^d}\mathcal F_{d-2\nu}H\left(\frac{d(x,y)}{2\pi}\right)\nonumber\\
&+\frac 1\pi\sum_{d/2\le \nu\le Q}
C_{\nu}\alpha_\nu(x,y)\int_{d(x,y)}^{+\infty}\mathcal F_1H\left(\frac t{2\pi}\right)
t\left(  t^{2}-d(x,y) ^{2}\right) ^{\nu-1+\left(
1-d\right)  /2}dt\nonumber\\
&+
\frac 1\pi\int_{0}^{+\infty}\mathcal F_1H\left(\frac t{2\pi}\right)R_Q(t,x,y)dt.
\end{align}
We have the following

\begin{theorem}\label{teo5}
Let $H\in L^1(\R)$ be even and continuous, and assume that its Fourier transform $\mathcal F_d H$ is supported in $[0,\varepsilon/(2\pi))$.
Then
\begin{align*}
&\left|\sum_{m=0}^{+\infty}H(\lambda_m)\varphi_m(x)\overline{\varphi_m(y)}-\alpha_0(x,y)\dfrac{1}{(2\pi)^d}\mathcal F_dH\left(\frac{d(x,y)}{2\pi}\right)\right|\\
&\le C\sum_{1\le\nu\le Q}
\int_{d(x,y)}^{+\infty}r^{2\nu-1}\left|\mathcal F_dH\left(\frac r{2\pi}\right)\right|dr+C
\int_{0}^{+\infty}r^{2Q+1}\left|\mathcal F_dH\left(\frac r{2\pi}\right)\right|dr.
\end{align*}
\end{theorem}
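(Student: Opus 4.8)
The plan is to start from the exact identity \eqref{formula}, which expresses $\sum_m H(\lambda_m)\varphi_m(x)\overline{\varphi_m(y)}$ as a finite sum of a ``main'' term for $\nu=0$, intermediate terms for $1\le\nu<d/2$, boundary-type terms for $d/2\le\nu\le Q$, and a remainder term involving $R_Q$. After isolating the $\nu=0$ contribution $\alpha_0(x,y)(2\pi)^{-d}\mathcal F_dH(d(x,y)/2\pi)$, one must bound each of the remaining pieces by expressions of the stated form, namely $\int_{d(x,y)}^{+\infty}r^{2\nu-1}|\mathcal F_dH(r/2\pi)|\,dr$ for $1\le\nu\le Q$ together with $\int_0^{+\infty}r^{2Q+1}|\mathcal F_dH(r/2\pi)|\,dr$. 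Throughout, the smooth coefficients $\alpha_\nu$ are bounded on the compact manifold $\mathcal M\times\mathcal M$, so they only contribute harmless constants absorbed into $C$.

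The main work is in three groups of estimates. \emph{First}, for each $\nu$ with $1\le\nu<d/2$, I would rewrite $\mathcal F_{d-2\nu}H(d(x,y)/2\pi)$ via its Hankel-transform representation \eqref{Hankel} in dimension $d-2\nu$: up to constants it equals $|d(x,y)/2\pi|^{-(d-2\nu-2)/2}\int_0^\infty H_0(s)J_{(d-2\nu-2)/2}(sd(x,y))s^{(d-2\nu)/2}\,ds$. The goal is to re-express this integral over $s$ (a frequency-space integral in dimension $d-2\nu$) as an integral over $r$ weighted by $\mathcal F_dH(r/2\pi)$ with the factor $r^{2\nu-1}$ and with the range of integration $[d(x,y),\infty)$; this is exactly the type of dimensional-shift identity relating Hankel transforms in dimensions $d$ and $d-2\nu$, which in physical-space terms amounts to the fact that $\mathcal F_{d-2\nu}$ of a radial function is, up to constants and powers of the radius, an iterated radial derivative or fractional integral of $\mathcal F_d$ of the same function. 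Concretely, since $H$ is supported in $[-\varepsilon,\varepsilon]$ its Fourier transforms in all dimensions are entire and the relevant integrals converge absolutely, so one can integrate by parts or use the Sonine-type formula to pass from dimension $d$ to dimension $d-2\nu$, picking up exactly the weight $r^{2\nu-1}$ and the lower limit $d(x,y)$ because the $\nu$-fold antiderivative of a function supported on $\{r\ge d(x,y)\}$ is supported there too. \emph{Second}, for $d/2\le\nu\le Q$, the term already appears in \eqref{formula} as $\int_{d(x,y)}^{+\infty}\mathcal F_1H(t/2\pi)\,t(t^2-d(x,y)^2)^{\nu-1+(1-d)/2}\,dt$; here one writes $\mathcal F_1H$ in terms of $\mathcal F_dH$ (again a one-dimensional-to-$d$-dimensional radial reduction), bounds $t^2-d(x,y)^2\le t^2$ on the domain, and counts powers of $t$ to match $t^{2\nu-1}$, giving a bound by $\int_{d(x,y)}^{+\infty}r^{2\nu-1}|\mathcal F_dH(r/2\pi)|\,dr$. \emph{Third}, for the $R_Q$ term, I would use the pointwise bound $|R_Q(t,x,y)|\le C|t|^{2Q+2-d}$ from Theorem \ref{sogge}, valid for $|t|\le\varepsilon$, and observe that the $t$-integral is effectively over $[0,\varepsilon]$ because $\mathcal F_1H(t/2\pi)$ (or rather the support condition pushed through) localizes it; combined with the one-to-$d$ dimensional reduction of $\mathcal F_1H$, this yields a bound by $\int_0^{+\infty}r^{2Q+1}|\mathcal F_dH(r/2\pi)|\,dr$ after accounting for the $r^{d-1}$ Jacobian in passing from $\mathcal F_1$ to $\mathcal F_d$.

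The step I expect to be the genuine obstacle is the \emph{first} one: making the dimensional-shift identity between $\mathcal F_{d-2\nu}H$ and $\mathcal F_dH$ completely rigorous and tracking that it produces \emph{precisely} the weight $r^{2\nu-1}$ and the cutoff $[d(x,y),\infty)$, rather than some nearby but different weight. The cleanest route is probably to express $\mathcal F_{d-2\nu}H(\rho)$ for $\rho=d(x,y)/2\pi$ using the classical formula that relates radial Fourier transforms across even dimension gaps — namely that $\mathcal F_{d-2}g(\rho)=-\tfrac1{2\pi}\rho^{-1}\tfrac{d}{d\rho}\,\mathcal F_dg(\rho)$ iterated $\nu$ times, which after $\nu$ iterations and integration back up expresses $\mathcal F_{d-2\nu}H(\rho)$ as $c_\nu\int_\rho^\infty (\text{power of }r)\,\mathcal F_dH(r)\,dr$ (the $\nu$-fold "lifting" being an integral operator precisely because $H$ is compactly supported so all boundary terms at $\infty$ vanish and $\mathcal F_dH$ is smooth). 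Matching constants and exponents then gives the claimed $r^{2\nu-1}$ weight. Once that identity is in hand, the remaining inequalities are straightforward majorizations, and triangle inequality over the finitely many values of $\nu$ (with $Q$ a fixed constant depending only on $d$) completes the proof.
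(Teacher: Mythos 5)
Your proposal follows essentially the same route as the paper: start from the exact expansion \eqref{formula} and convert the lower-dimensional radial transforms $\mathcal F_{d-2\nu}H$ and $\mathcal F_1H$ into weighted integrals of $\mathcal F_dH$ over $[d(x,y),+\infty)$. The ``dimensional-shift identity'' you are trying to re-derive is precisely the Stempak--Trebels transplantation formula \eqref{Transplantation}, which the paper simply cites: $\mathcal F_{d'}H(s)=c_{d,d'}\int_s^{+\infty}(r^2-s^2)^{(d-d')/2-1}r\,\mathcal F_dH(r)\,dr$, which for $d'=d-2\nu$ gives the exact weight $(r^2-s^2)^{\nu-1}r\le r^{2\nu-1}$. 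Two details in your write-up need repair, both routine. First, the one-step relation you quote has the dimensional shift reversed: with the paper's normalization it is $\mathcal F_{d+2}g(\rho)=-\frac{1}{2\pi\rho}\frac{d}{d\rho}\mathcal F_dg(\rho)$, so passing \emph{down} from $d$ to $d-2$ is an integration, $\mathcal F_{d-2}g(\rho)=2\pi\int_\rho^{\infty}s\,\mathcal F_dg(s)\,ds$; your stated conclusion (an integral over $[\rho,\infty)$ with vanishing boundary terms) is the correct one, but it does not follow from the relation as you wrote it. Second, in the range $d/2\le\nu\le Q$ your plan to bound $(t^2-d(x,y)^2)^{\nu-1+(1-d)/2}$ by a power of $t$ via $t^2-d(x,y)^2\le t^2$ fails when $d$ is even and $\nu=d/2$, since the exponent is $-1/2$, the inequality then goes the wrong way, and the factor blows up (integrably) at $t=d(x,y)$; the correct move, as in the paper, is to substitute the transplantation formula for $\mathcal F_1H$ first, exchange the order of integration, and bound the inner integral $\int_{d(x,y)}^{r}(r^2-t^2)^{(d-3)/2}t\left(t^2-d(x,y)^2\right)^{\nu-1+(1-d)/2}dt\le\gamma\left(r^2-d(x,y)^2\right)^{\nu-1}$ by a Beta-function computation. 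With these two fixes your argument coincides with the paper's proof, including the treatment of the $R_Q$ term.
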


\begin{proof}
We want to express the formula \eqref{formula} in terms of $\mathcal F_dH$ rather then $\mathcal F_1H$ or $\mathcal F_{d-2\nu}H$. This can be done by means of the following transplantation result (see \cite[eq. (3.9)]{st}):  for $d>d'\ge1$,%
\begin{equation}
\mathcal{F}_{d'}H (s)
=c_{d,d'}\int_{s}^{+\infty}(  r^{2}-s^{2})  ^{\left(
d-d'\right)  /2-1}r\mathcal F_dH(  r)  dr. \label{Transplantation}%
\end{equation}

Thus, if $\mathcal F_dH$ is supported in $[0,\varepsilon/(2\pi)]$, then $\mathcal F_1H$ is supported in $[0,\varepsilon/(2\pi)]$ too, and $\mathcal C H$ is supported in $[-\varepsilon,\varepsilon]$, as required. Also, if $\mathcal F_d H$ is nonnegative, so is $\mathcal F_1 H$.  

Let us now assume $1\le\nu<d/2$. Then
\begin{align*}
\left|\mathcal F_{d-2\nu}H\left(\dfrac{d(x,y)}{2\pi}\right)\right|
&=c_{d,d-2\nu}\left|\int_{d(x,y)/(2\pi)}^{+\infty}\left(  r^{2}-\dfrac{d(x,y)^{2}}{(2\pi)^2}\right)  ^{
\nu-1}r\mathcal F_dH(  r)  dr\right|\\
&=\dfrac{c_{d,d-2\nu}}{(2\pi)^{2\nu}}\left|\int_{d(x,y)}^{+\infty}\left(  r^{2}-{d(x,y)^{2}}\right)  ^{
\nu-1}r\mathcal F_dH\left(  \dfrac{r}{2\pi}\right)  dr\right|\\
&\le C \int_{d(x,y)}^{+\infty} r^{2\nu-1}\left|\mathcal F_dH\left(  \dfrac{r}{2\pi}\right)\right|  dr.
\end{align*}

Similarly, for $d\ge2$
\begin{align*}
&\int_{d(x,y)}^{+\infty}\mathcal F_1H\left(\dfrac t{2\pi}\right)
t(  t^{2}-d(x,y) ^{2}) ^{\nu-1+(
1-d)  /2}dt\\
&=c_{d}\int_{d(x,y)}^{+\infty}
\int_{t/(2\pi)}^{+\infty}\left(r^2-\dfrac{t^2}{(2\pi)^2}\right)^{(d-3)/2}r\mathcal F_dH(r)dr
t(  t^{2}-d(x,y) ^{2}) ^{\nu-1+(
1-d)  /2}dt\\
&=\dfrac{c_{d}}{(2\pi)^{d-1}}\int_{d(x,y)}^{+\infty}
r\mathcal F_dH\left(\dfrac r{2\pi}\right)
\int_{d(x,y)}^{r}(r^2-t^2)^{(d-3)/2}
t(  t^{2}-d(x,y) ^{2}) ^{\nu-1+(
1-d)  /2}dtdr.
\end{align*}
It can be proved easily that for some constant $\gamma$ depending on $d\ge 2$ and on $\nu$ between $d/2$ and $Q$, for all $r\ge d(x,y)$
\[
\int_{d(x,y)}^{r}(r^2-t^2)^{(d-3)/2}
t(  t^{2}-d(x,y) ^{2}) ^{\nu-1+(
1-d)  /2}dt\le\gamma (r^2-d(x,y)^2)^{\nu-1}.
\]
It follows that for all $d\ge 1$ and for all $d/2\le\nu\le Q$, 
\begin{align*}
&\left|\frac 1\pi\sum_{d/2\le \nu\le Q}
C_{\nu}\alpha_\nu(x,y)\int_{d(x,y)}^{+\infty}\mathcal F_1H\left(\dfrac t{2\pi}\right)
t(  t^{2}-d(x,y) ^{2}) ^{\nu-1+(
1-d)  /2}dt\right|\\
& \le C\sum_{d/2\le \nu\le Q} \int_{d(x,y)}^{+\infty}r \left|\mathcal F_d H\left(\dfrac r{2\pi}\right)\right|
(r^2-d(x,y)^2)^{\nu-1}dr \\
& \le C\sum_{d/2\le \nu\le Q} \int_{d(x,y)}^{+\infty}r^{2\nu-1} \left|\mathcal F_d H\left(\dfrac r{2\pi}\right)\right|
dr.
\end{align*}
The same strategy can be used to estimate the last term of the kernel, the one involving the remainder $R_Q$. Indeed,
\begin{align*}
\dfrac 1\pi\int_{0}^{+\infty}\mathcal F_1H\left(\dfrac t{2\pi}\right)R_Q(t,x,y)dt
&=\dfrac {c_d}\pi\int_{0}^{+\infty}\int_{t/2\pi}^{+\infty}
(r^2-(t/2\pi)^2)^{(d-3)/2}r\mathcal F_dH(r)dr
R_Q(t,x,y)dt\\
&=\dfrac {c_d}{\pi(2\pi)^{d-1}}\int_{0}^{+\infty} r\mathcal F_dH\left(\dfrac r
{2\pi}\right)\int_{0}^{r}
(r^2-t^2)^{(d-3)/2}
R_Q(t,x,y)dtdr.
\end{align*}
It follows that 
\[
\left|\frac 1\pi\int_{0}^{+\infty}\mathcal F_1H\left(\dfrac t{2\pi}\right)R_Q(t,x,y)dt\right|
\le C\int_{0}^{+\infty} \left|\mathcal F_dH\left(\dfrac r
{2\pi}\right)\right| r^{2Q+1}dr.
\]\end{proof}

Let us now fix a more specific choice for the function $H$. 
Let $h$ be an integrable radial function on $\R^d$ and let $\eta$ be a continuous integrable radial function on
$\R^d$ with Fourier transform compactly supported in the ball centered at the origin and with radius $\varepsilon/(2\pi)$.
Let us fix a nonnegative integer $X$, and define
\[
H(|z|)=h\left(\frac{\cdot}{\lambda_X}\right)\ast\eta(z),
\]
where the convolution is intended in $\R^d$. Observe that with the above choices, $H$ is continuous, it belongs to $L^1(\R)$ and 
\[
\mathcal F_d H(t)=\lambda_X^d\mathcal F_d h(\lambda_X t)\mathcal F_d\eta(t)
\] 
is supported in $[0,\varepsilon/(2\pi)]$, so that the previous theorem can be applied. 

\begin{theorem}
Let $h$ be an integrable radial function on $\R^d$ such that for some $G> 0$, and for some positive constant $C$,
\[
\left|\mathcal F_dh(t)\right|\le C\dfrac 1{(1+t)^{2G}}.
\]
Let $\eta$ be a continuous integrable radial function on
$\R^d$ with Fourier transform $\mathcal F_d\eta$ compactly supported in the ball centered at the origin and with radius $\varepsilon/(2\pi)$.
Let us fix a nonnegative integer $X$, and define
\[
H(|z|)=h\left(\dfrac{\cdot}{\lambda_X}\right)\ast\eta(z),
\]
where the convolution is intended in $\R^d$.
Then
\begin{align*}
&\left|\sum_{m=0}^{+\infty}H(\lambda_m)\varphi_m(x)\overline{\varphi_m(y)}-\alpha_0(x,y)\dfrac{1}{(2\pi)^d}\mathcal F_dH\left(\dfrac{d(x,y)}{2\pi}\right)\right|\\
&\qquad\qquad\qquad\qquad\qquad\qquad\le C
\begin{cases}
\lambda_X^{d-2G} & \text{ if } 0<G<1,\\
\\
\dfrac{\lambda_X^{d-2}}{(1+\lambda_X d(x,y))^{2G-2}} & \text{ if } G>1,\, G \text{ non integer,}\\
\\
\dfrac{\lambda_X^{d-2}}{(1+\lambda_X d(x,y))^{2G-2}}+ \lambda_X^{d-2G}\log{\lambda_X} & \text{ if } G\ge1,\, G \text{ integer.}
\end{cases}
\end{align*}
\end{theorem}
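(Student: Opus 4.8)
The idea is to feed this specific $H$ into Theorem~\ref{teo5} and then reduce everything to an elementary one--variable integral estimate, split according to the three ranges of $G$. To this end, fix an integer $Q$ with $Q>d+3$ and $Q\ge G$ (it depends only on $d$ and $G$, and is the order of the parametrix in Theorem~\ref{sogge}, hence in Theorem~\ref{teo5}). Since $\mathcal F_dH(t)=\lambda_X^d\mathcal F_dh(\lambda_Xt)\mathcal F_d\eta(t)$ is supported in $[0,\varepsilon/(2\pi)]$ and $H$ is even, continuous and in $L^1(\R)$, Theorem~\ref{teo5} bounds the left--hand side by
\[
C\sum_{1\le\nu\le Q}\int_{d(x,y)}^{+\infty}r^{2\nu-1}\Bigl|\mathcal F_dH\Bigl(\tfrac r{2\pi}\Bigr)\Bigr|\,dr
+C\int_{0}^{+\infty}r^{2Q+1}\Bigl|\mathcal F_dH\Bigl(\tfrac r{2\pi}\Bigr)\Bigr|\,dr .
\]
Now $\mathcal F_d\eta$ is bounded by $\|\eta\|_{L^1(\R^d)}$ and supported in the ball of radius $\varepsilon/(2\pi)$, whereas $|\mathcal F_dh(t)|\le C(1+t)^{-2G}$; together with $1+\lambda_Xr/(2\pi)\ge(1+\lambda_Xr)/(2\pi)$ this gives $|\mathcal F_dH(r/(2\pi))|\le C\lambda_X^d(1+\lambda_Xr)^{-2G}$ on $[0,\varepsilon]$ and $0$ for $r>\varepsilon$. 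The substitution $u=\lambda_Xr$ then turns the generic $\nu$--summand into $C\lambda_X^{d-2\nu}\int_{\lambda_Xd(x,y)}^{\lambda_X\varepsilon}u^{2\nu-1}(1+u)^{-2G}\,du$ and the remainder term into $C\lambda_X^{d-2Q-2}\int_0^{\lambda_X\varepsilon}u^{2Q+1}(1+u)^{-2G}\,du$.

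The second ingredient is the elementary bound, for a positive integer $k$ and $0\le a\le b$ (from $u^k(1+u)^{-2G}\le(1+u)^{k-2G}$):
\[
\int_a^b\frac{u^k}{(1+u)^{2G}}\,du\ \le\
\begin{cases}
C_{k,G}\,(1+b)^{k+1-2G}, & k+1>2G,\\[3pt]
C_{k,G}\,\log(1+b), & k+1=2G,\\[3pt]
C_{k,G}\,(1+a)^{k+1-2G}, & k+1<2G.
\end{cases}
\]
Taking $a=\lambda_Xd(x,y)$, $b=\lambda_X\varepsilon$ and $\lambda_X\ge1$, so that $1+b\asymp\lambda_X$, each summand becomes: $\asymp\lambda_X^{d-2G}$ when $2\nu>2G$, i.e.\ $\nu>G$ (and always for the remainder term, since $Q\ge G$ forces $2Q+2>2G$); at most $C\,\lambda_X^{d-2\nu}(1+\lambda_Xd(x,y))^{2\nu-2G}$ when $\nu<G$; and at most $C\,\lambda_X^{d-2G}\log\lambda_X$ in the single case $\nu=G$, which occurs exactly when $G$ is a positive integer. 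Because $\M$ is compact we have $(1+\lambda_Xd(x,y))/\lambda_X=\lambda_X^{-1}+d(x,y)\le 1+\diam(\M)$, whence for $\nu\ge1$
\[
\lambda_X^{d-2\nu}(1+\lambda_Xd(x,y))^{2\nu-2G}
=\frac{\lambda_X^{d-2}}{(1+\lambda_Xd(x,y))^{2G-2}}\Bigl(\frac{1+\lambda_Xd(x,y)}{\lambda_X}\Bigr)^{2\nu-2}
\le C\,\frac{\lambda_X^{d-2}}{(1+\lambda_Xd(x,y))^{2G-2}},
\]
and similarly $\lambda_X^{d-2G}\le C\,\lambda_X^{d-2}(1+\lambda_Xd(x,y))^{2-2G}$ whenever $G>1$. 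Collecting the finitely many contributions and distinguishing $0<G<1$ (all $\nu\ge1$ have $\nu>G$, total $\asymp\lambda_X^{d-2G}$), $G>1$ non--integer (the $\nu=1$ term yields the leading $\lambda_X^{d-2}(1+\lambda_Xd(x,y))^{2-2G}$, all the others being no larger), and $G\ge1$ integer (the same, plus the extra $\lambda_X^{d-2G}\log\lambda_X$ produced by $\nu=G$) gives exactly the three stated bounds.

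I do not foresee a real obstacle here: the analytic work --- Hadamard's parametrix and the transplantation identity \eqref{Transplantation} --- is already absorbed into Theorems~\ref{sogge}, \ref{teo3}, \ref{CB} and \ref{teo5}. The only points requiring attention are: choosing $Q\ge G$ so that the remainder term and the $\nu>G$ terms cost no more than $\lambda_X^{d-2G}$; realising that the borderline (logarithmic) case of the one--variable integral occurs precisely for $\nu=G$ with $G$ an integer, which is the sole source of the factor $\log\lambda_X$; and using compactness of $\M$ to absorb the harmless powers $((1+\lambda_Xd(x,y))/\lambda_X)^{2\nu-2}$ and to compare $\lambda_X^{d-2G}$ with $\lambda_X^{d-2}(1+\lambda_Xd(x,y))^{2-2G}$ when $G>1$.
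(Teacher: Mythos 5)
Your proposal is correct and follows essentially the same route as the paper's own proof: apply Theorem~\ref{teo5} to $\mathcal F_dH(t)=\lambda_X^d\mathcal F_dh(\lambda_Xt)\mathcal F_d\eta(t)$, rescale by $u=\lambda_X r$, and use the elementary three-case estimate for $\int u^{k}(1+u)^{-2G}\,du$ according to $\nu<G$, $\nu=G$, $\nu>G$, with the remainder term controlled by choosing $Q$ large. The only difference is that you spell out the final collection step (absorbing the $\nu>1$ and $\nu>G$ terms via compactness of $\M$), which the paper leaves as ``the thesis follows.''
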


\begin{proof} 
Let $Q$ be an integer greater than $d+1$ and than $G-1$. Then we may apply Theorem \ref{teo5}. For all $1\le\nu\le Q$,
\begin{align*}
\int_{d(x,y)}^{+\infty}r^{2\nu-1}\left|\mathcal F_dH\left(\frac r{2\pi}\right)\right|dr
&\le C \lambda_X^d\|\mathcal F_d \eta\|_\infty\int_{d(x,y)}^{\varepsilon}
\frac{1}{(1+\lambda_X r/(2\pi))^{2G}}r^{2\nu-1}dr\\
&= C (2\pi)^{2\nu} \lambda_X^{d-2\nu}\|\mathcal F_d \eta\|_\infty\int_{\lambda_X d(x,y)/(2\pi)}^{\lambda_X\varepsilon/2\pi}
\frac{r^{2\nu-1}}{(1+r)^{2G}}dr\\
&\le C \lambda_X^{d-2\nu}\int_{\lambda_X d(x,y)/(2\pi)}^{\lambda_X\varepsilon/2\pi}
\frac{1}{(1+r)^{2G-2\nu+1}}dr\\
&\le C
\begin{cases}
 \dfrac{\lambda_X^{d-2\nu}}{(1+\lambda_X d(x,y))^{2G-2\nu}} & \text{ if } \nu<G,\\
 \\
\lambda_X^{d-2G}& \text{ if } \nu>G, \\
\\
\lambda_X^{d-2G}\log(\lambda_X)   & \text{ if } \nu=G \text{ (with $G$ integer).}
\end{cases}
\end{align*}
On the other hand,
\begin{align*}
\int_{0}^{+\infty}r^{2Q+1}\left|\mathcal F_dH\left(\frac r{2\pi}\right)\right|dr
&\le C \lambda_X^d\|\mathcal F_d \eta\|_\infty\int_{0}^{\varepsilon}
\frac{1}{(1+\lambda_X r/(2\pi))^{2G}}r^{2Q+1}dr\\
&= C (2\pi)^{2Q+2} \lambda_X^{d-2Q-2}\|\mathcal F_d \eta\|_\infty\int_{0}^{\lambda_X\varepsilon/2\pi}
\frac{r^{2Q+1}}{(1+r)^{2G}}dr\\
&\le C \lambda_X^{d-2G}.
\end{align*}
It now follows from Theorem \ref{teo5} that
\begin{align*}
&\left|\sum_{m=0}^{+\infty}H(\lambda_m)\varphi_m(x)\overline{\varphi_m(y)}-\alpha_0(x,y)\dfrac{1}{(2\pi)^d}\mathcal F_dH\left(\frac{d(x,y)}{2\pi}\right)\right|\\
&\le C\sum_{1\le\nu<G}
\dfrac{\lambda_X^{d-2\nu}}{(1+\lambda_X d(x,y))^{2G-2\nu}}
+C \sum_{\nu=G}\lambda_X^{d-2G}\log(\lambda_X)
+C\sum_{G<\nu\le Q}\lambda_X^{d-2G},
\end{align*}
and the thesis follows.
\end{proof}

We are now ready for the final step. The kernel we have found so far is not what we wanted, since $H$ is not supported in $[0,\lambda_X]$. Therefore we need some further assumptions on $h$ and $\eta$. 

\begin{theorem}
Let $h$ be an integrable radial function on $\R^d$ such that for some  $G> (d+2)/2$, and for some positive constant $C$,
\[
\left|\mathcal F_dh(t)\right|\le C\frac 1{(1+t)^{2G}},
\]
and assume that $h$ is compactly supported in the ball centered at the origin and with radius $1$. Let $\eta$ be a continuous integrable radial function on
$\R^d$ with Fourier transform $\mathcal F_d\eta$ compactly supported in the ball centered at the origin with radius $\varepsilon/(2\pi)$ and that equals $1$ in the ball centered at the origin with radius $\varepsilon/(4\pi)$.
Let $I(z)$ be defined by
\begin{equation*}
\label{functionI}
I(z)=h(z/\lambda_X)-H(z)=\int_{\mathbb{R}^{d}}\left[  h\left(  \frac{z}%
{\lambda_{X}}\right)  -h\left(  \frac{z-y}{\lambda_{X}}\right)  \right]
\eta(  y)  dy.
\end{equation*}
Then
\[
\left|\sum_{m=0}^{+\infty}I(\lambda_m)\varphi_m(x)\overline{\varphi_m(y)}\right|
\le c\lambda_X^{-\lfloor 2G\rfloor+3d}.
\]
\end{theorem}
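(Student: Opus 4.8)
The plan is to estimate the spectral sum $\sum_{m=0}^{+\infty}I(\lambda_m)\varphi_m(x)\overline{\varphi_m(y)}$ termwise, using the sharp pointwise Weyl law, after establishing two complementary pointwise bounds on $I$ (identified, as elsewhere in the paper, with its even radial profile on $\R$): a uniform bound carrying a gain of $\lambda_X^{-2G}$, obtained on the Fourier side, and a rapid-decay bound that becomes available once $|z|$ exceeds the support radius $\lambda_X$ of $h(\cdot/\lambda_X)$. For the latter I would additionally take $\eta$ rapidly decreasing — which is legitimate, as one may choose $\mathcal F_d\eta$ smooth with the prescribed values, and then $|\eta(y)|\le C_N(1+|y|)^{-N}$ for every $N$ (the constant in the final estimate then depending on this choice).

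First I would pass to the Fourier side. Since $\mathcal F_dH(t)=\lambda_X^d\mathcal F_dh(\lambda_Xt)\mathcal F_d\eta(t)$, we get $\mathcal F_dI(\xi)=\lambda_X^d\mathcal F_dh(\lambda_X\xi)\bigl(1-\mathcal F_d\eta(\xi)\bigr)$, and the decisive point is that $1-\mathcal F_d\eta$ vanishes on the ball of radius $\varepsilon/(4\pi)$, where $\mathcal F_d\eta\equiv1$. Using $|\mathcal F_dh(u)|\le C(1+|u|)^{-2G}$ with $2G>d$ and the substitution $u=\lambda_X\xi$,
\[
\|\mathcal F_dI\|_{L^1(\R^d)}\le C\lambda_X^d\int_{|\xi|>\varepsilon/(4\pi)}|\mathcal F_dh(\lambda_X\xi)|\,d\xi=C\int_{|u|>\lambda_X\varepsilon/(4\pi)}|\mathcal F_dh(u)|\,du\le C\lambda_X^{d-2G}.
\]
As $I$ and $\mathcal F_dI$ are both integrable, Fourier inversion gives $|I(z)|\le\|\mathcal F_dI\|_{L^1(\R^d)}\le C\lambda_X^{d-2G}$ for every $z$.

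Next I would exploit the compact support of $h$: since $h(z/\lambda_X)=0$ for $|z|>\lambda_X$, we have $I(z)=-H(z)$ when $|z|>2\lambda_X$, and in $H(z)=\int h((z-y)/\lambda_X)\eta(y)\,dy$ the integrand is supported in $|z-y|\le\lambda_X$, hence in $|y|\ge|z|-\lambda_X\ge|z|/2$; therefore $|I(z)|=|H(z)|\le\lambda_X^d\|h\|_{L^1(\R^d)}\sup_{|y|\ge|z|/2}|\eta(y)|\le C_N\lambda_X^d(1+|z|)^{-N}$ for $|z|>2\lambda_X$. Then, invoking the sharp pointwise Weyl law (see \cite{sogge}) in the form $\sum_{\lambda_m\in[k,k+1)}|\varphi_m(x)|^2\le C(1+k)^{d-1}$ uniformly in $x$, together with Cauchy--Schwarz, I would reduce the problem to
\[
\left|\sum_{m=0}^{+\infty}I(\lambda_m)\varphi_m(x)\overline{\varphi_m(y)}\right|\le C\sum_{k=0}^{\infty}(1+k)^{d-1}\max_{k\le s\le k+1}|I(s)|,
\]
the two pointwise bounds ensuring absolute convergence. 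Splitting the sum at $k\approx 2\lambda_X$, the part with $k\le 2\lambda_X$ is $\le C\lambda_X^{d-2G}\sum_{k\le 2\lambda_X}(1+k)^{d-1}\le C\lambda_X^{2d-2G}$, and the part with $k>2\lambda_X$ is $\le C_N\lambda_X^d\sum_{k>2\lambda_X}(1+k)^{d-1-N}\le C_N\lambda_X^{2d-N}$ for $N>d$. Since $2d-2G\le 3d-\lfloor 2G\rfloor$ (because $\lfloor 2G\rfloor-2G\le 0\le d$) and one may take $N\ge\lfloor 2G\rfloor-d$, both contributions are $\le c\lambda_X^{-\lfloor 2G\rfloor+3d}$, which is the assertion.

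The main difficulty is conceptual rather than computational: one must recognize that neither estimate is enough by itself — the uniform bound does not control the divergent Weyl weights $\sum(1+k)^{d-1}$, while the mere boundedness of $H$ that $\eta\in L^1$ alone provides carries no decay in $\lambda_X$ — and that the proof closes because the $\lambda_X^{-2G}$ cancellation hidden in $1-\mathcal F_d\eta$ disposes of the frequencies $|z|\lesssim\lambda_X$ while the compact support of $h$ disposes of the large ones. Once this split is in place, the remaining steps — the Fourier $L^1$ estimate, the support bookkeeping for $|z|>2\lambda_X$, and the exponent comparison — are routine.
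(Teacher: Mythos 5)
Your proof is correct, and for the central estimate it takes a genuinely different route from the paper. Where the paper bounds $I(z)$ for $|z|\le 2\lambda_X$ on the physical side --- Taylor-expanding $h$ to order $K=\lfloor 2G\rfloor-d-1$ (using $h\in\mathcal C^{K}$, deduced from the decay of $\mathcal F_dh$ via Stein--Weiss) and exploiting that all moments $\int y^\alpha\eta(y)\,dy$ with $1\le|\alpha|\le K-1$ vanish because $\partial^\alpha\mathcal F_d\eta(0)=0$, which yields $|I(z)|\le c\lambda_X^{-K}$ --- you work on the Fourier side: $\mathcal F_dI(\xi)=\lambda_X^d\mathcal F_dh(\lambda_X\xi)\bigl(1-\mathcal F_d\eta(\xi)\bigr)$ is supported in $|\xi|>\varepsilon/(4\pi)$, so $\|\mathcal F_dI\|_{L^1}\lesssim\lambda_X^{d-2G}$ and inversion gives the uniform bound. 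This uses the full strength of the hypothesis $\mathcal F_d\eta\equiv1$ near the origin (not merely the vanishing of its derivatives at $0$), avoids the Taylor machinery and the $\mathcal C^K$ regularity of $h$ altogether (only $2G>d$ is needed), and even yields the slightly better exponent $\lambda_X^{d-2G}$ in place of $\lambda_X^{d+1-\lfloor 2G\rfloor}$, hence an overall bound $\lambda_X^{2d-2G}\le\lambda_X^{3d-\lfloor 2G\rfloor}$. The treatment of $|z|>2\lambda_X$ (support of $h$ plus rapid decay of $\eta$) is essentially the paper's; note that the rapid decay of $\eta$ you add as an explicit assumption is also used, without being stated in the hypotheses, in the paper's own proof (both for the far estimate and for the existence of the moments of $\eta$), so your caveat does not put you at a disadvantage --- in both arguments $\eta$ is an auxiliary function one is free to construct from a radial $C_c^\infty$ choice of $\mathcal F_d\eta$. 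Finally, your summation over the spectrum via the local Weyl law in unit windows plus Cauchy--Schwarz is a sharper (but equally standard) substitute for the paper's $\|\varphi_m\|_\infty\lesssim(1+\lambda_m)^{(d-1)/2}$ combined with eigenvalue counting; either version closes the argument, and your exponent bookkeeping at the end is correct.
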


\begin{proof}
Let us first give an estimate on the function
$I(z)$.
Since $\eta(  y)$ has rapid decay at infinity and $h(
z)  $ is supported in $\{|z|\le1\} $, if $\vert
z\vert \geq2\lambda_{X}$ we have%
\begin{align*}
\vert I(  z)  \vert  &  \leq\int_{\mathbb{R}^{d}%
}\left\vert h\left(  \frac{z-y}{\lambda_{X}}\right)  \eta(  y)
\right\vert dy\leq\int_{\left\{  \vert z-y\vert \leq\lambda
_{X}\right\}  }\left\vert h\left(  \frac{z-y}{\lambda_{X}}\right)  \eta(
y)  \right\vert dy\\
&  \leq c\int_{\left\{  \vert y\vert \geq\vert z\vert
-\lambda_{X}\right\}  }\vert \eta(y)  \vert dy\leq
C(  1+\vert z\vert -\lambda_{X})  ^{-M},
\end{align*}
for some $M$ as large as needed.
Assume $\left\vert z\right\vert <2\lambda_{X}$. By \cite[Theorem 1.7]{SW}, the decay of the Fourier transform of $h$  implies that $h\in\mathcal C^{\lfloor 2G\rfloor-d-1}(\R^d)$. By Taylor's theorem with
integral reminder, setting $K=\lfloor 2G\rfloor-d-1\ge1$, we can write%
\begin{align*}
 h\left(  \frac{z}{\lambda_{X}}-\frac{y}{\lambda_{X}}\right)  
&=   h\left(  \frac{z}{\lambda_{X}}\right)  +\sum_{1\leq\left\vert
\alpha\right\vert \leq K-1}\frac{1}{\alpha!}\frac{\partial^{\left\vert
\alpha\right\vert }h}{\partial x^{\alpha}}\left(  \frac{z}{\lambda_{X}%
}\right)  \left(  -\frac{y}{\lambda_{X}}\right)  ^{\alpha}\\
&  +\sum_{\left\vert \alpha\right\vert =K}\frac{K}{\alpha!}\left(  -\frac
{y}{\lambda_{X}}\right)  ^{\alpha}\int_{0}^{1}(  1-t)  ^{K-1}%
\frac{\partial^{\left\vert \alpha\right\vert }h}{\partial x^{\alpha}}\left(
\frac{z}{\lambda_{X}}-t\frac{y}{\lambda_{X}}\right)  dt
\end{align*}
so that%
\[
h\left(  \frac{z}{\lambda_{X}}-\frac{y}{\lambda_{X}}\right)  =h\left(
\frac{z}{\lambda_{X}}\right)  +\sum_{1\leq\left\vert \alpha\right\vert \leq
K-1}\frac{1}{\alpha!}\frac{\partial^{\left\vert \alpha\right\vert }h}{\partial
x^{\alpha}}\left(  \frac{z}{\lambda_{X}}\right)  \left(  -\frac{y}{\lambda
_{X}}\right)  ^{\alpha}+O\left(  \left\vert -\frac{y}{\lambda_{X}}\right\vert
^{K}\right)  .
\]
It follows that%
\begin{align*}
I\left(  z\right)   &  =\int_{\mathbb{R}^{d}}\left[  h\left(  \frac{z}%
{\lambda_{X}}\right)  -h\left(  \frac{z-y}{\lambda_{X}}\right)  \right]
\eta\left(  y\right)  dy\\
&  =-\sum_{1\leq\left\vert \alpha\right\vert \leq K-1}\frac{1}{\alpha!}%
\frac{\partial^{\left\vert \alpha\right\vert }h}{\partial x^{\alpha}}\left(
\frac{z}{\lambda_{X}}\right)  \int_{\mathbb{R}^{d}}\left(  -\frac{y}%
{\lambda_{X}}\right)  ^{\alpha}\eta\left(  y\right)  dy+\int_{\mathbb{R}^{d}%
}O\left(  \frac{\left\vert y\right\vert ^{K}}{\lambda_{X}^{K}}\right)
\eta\left(  y\right)  dy
\end{align*}
and since
\[
\int_{\mathbb{R}^{d}}y^{\alpha}\eta(  y)  dy=\mathcal{F}_{d}(
y^{\alpha}\eta(  y)  ) (  0)  =(  -2\pi
i)  ^{-\vert \alpha\vert }\frac{\partial^{\vert
\alpha\vert }\mathcal{F}_{d}\eta}{\partial\xi^{\alpha}}(  0)
=0
\]
we obtain%
\[
\vert I(  z)  \vert \leq c\lambda_{X}^{-K}.
\]

The kernel
\[
\sum_{m=0}^{+\infty}I(\lambda_m)\varphi_m(x)\overline{\varphi_m(y)}
\]
can be estimated uniformly by means of Weyl's estimates on the eigenfunctions, $\|\varphi_m\|_\infty\le c(1+\lambda_m)^{(d-1)/2}$. Indeed, if $M>2d-1$,
\begin{align*}
\left|\sum_{m=0}^{+\infty}I(\lambda_m)\varphi_m(x)\overline{\varphi_m(y)}\right|
&\le c\lambda_X^{-K}\sum_{\lambda_m\le2\lambda_X}(1+\lambda_m)^{d-1}+c\sum_{\lambda_m\ge2\lambda_X}(1+\lambda_m-\lambda_X)^{-M}(1+\lambda_m)^{d-1}\\
&\le c\lambda_X^{-K+2d-1}+c\sum_{\lambda_m\ge2\lambda_X}
\lambda_m^{-M+d-1}\\
&\le c\lambda_X^{-K+2d-1}+c\sum_{k=1}^{+\infty}\sum_{2^k\lambda_X\le\lambda_m\le2^{k+1}\lambda_X}
\lambda_m^{-M+d-1}\\
&\le c\lambda_X^{-K+2d-1}+c\sum_{k=1}^{+\infty}
\lambda_X^d2^{dk}(\lambda_X2^k)^{-M+d-1}\\
&\le c\lambda_X^{-K+2d-1}+c\lambda_X^{-M+2d-1}\sum_{k=1}^{+\infty}
\left(2^{-M+2d-1}\right)^k\\
&\le c\lambda_X^{-K+2d-1}+c\lambda_X^{-M+2d-1}.
\end{align*}
Since we can take $M\ge K$, we have therefore proved the thesis. \end{proof}

We are ready to state our final result.
\begin{theorem}\label{preciso}
Let $h$ be an integrable radial function on $\R^d$ such that for some $G>(d+2)/2$, and for some positive constant $C$,
\[
\left|\mathcal F_dh(t)\right|\le C\frac 1{(1+t)^{2G}},
\]
and assume that $h$ is compactly supported in the ball centered at the origin and with radius $1$. Then
\begin{align*}
\left|\sum_{m=0}^{X}h\left(\frac{\lambda_m}{\lambda_X}\right)\varphi_m(x)\overline{\varphi_m(y)}-\alpha_0(x,y)\frac {\lambda_X^d}{(2\pi)^d}\mathcal F_dh\left(\frac{\lambda_Xd(x,y)}{2\pi}\right)\right|
&\le C
\frac{\lambda_X^{d-2}}{(1+\lambda_X d(x,y))^{2G-2}}
+C\lambda_X^{3d-\lfloor 2G\rfloor}.
\end{align*}
\end{theorem}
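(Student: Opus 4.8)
The plan is to derive the statement by combining the two estimates proved in the two theorems above — one for $\sum_{m}H(\lambda_m)\varphi_m(x)\overline{\varphi_m(y)}$, one for $\sum_{m}I(\lambda_m)\varphi_m(x)\overline{\varphi_m(y)}$ — once two minor technical points are dealt with: passing from the truncated sum $\sum_{m=0}^{X}$ to the full series $\sum_{m=0}^{+\infty}$, and replacing $\mathcal F_dH$ by $\lambda_X^d\mathcal F_dh$. First I would fix a continuous integrable radial $\eta$ on $\R^d$ with $\mathcal F_d\eta$ supported in the ball of radius $\varepsilon/(2\pi)$ and identically $1$ on the ball of radius $\varepsilon/(4\pi)$, and set $H(|z|)=h(\cdot/\lambda_X)\ast\eta(z)$ and $I(z)=h(z/\lambda_X)-H(z)$ exactly as in the two theorems above (both are applicable, since $h$ has the prescribed Fourier decay, is supported in the unit ball, and $G>(d+2)/2>1$). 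Since $h$ is supported in the unit ball, $h(\lambda_m/\lambda_X)=0$ for every $m>X$; hence, using $h(z/\lambda_X)=H(z)+I(z)$ and the radiality of $H$ and $I$,
\[
\sum_{m=0}^{X}h\left(\frac{\lambda_m}{\lambda_X}\right)\varphi_m(x)\overline{\varphi_m(y)}=\sum_{m=0}^{+\infty}H(\lambda_m)\varphi_m(x)\overline{\varphi_m(y)}+\sum_{m=0}^{+\infty}I(\lambda_m)\varphi_m(x)\overline{\varphi_m(y)}.
\]

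Next I would estimate the two series. By the estimate for $\sum_{m}I(\lambda_m)\varphi_m(x)\overline{\varphi_m(y)}$ proved above, the $I$-series is $O(\lambda_X^{3d-\lfloor 2G\rfloor})$. By the estimate for $\sum_{m}H(\lambda_m)\varphi_m(x)\overline{\varphi_m(y)}$ proved above, applied in the range $G>1$,
\[
\left|\sum_{m=0}^{+\infty}H(\lambda_m)\varphi_m(x)\overline{\varphi_m(y)}-\alpha_0(x,y)\frac{1}{(2\pi)^d}\mathcal F_dH\left(\frac{d(x,y)}{2\pi}\right)\right|\le C\frac{\lambda_X^{d-2}}{(1+\lambda_Xd(x,y))^{2G-2}}+C\lambda_X^{d-2G}\log\lambda_X,
\]
the logarithmic term occurring only for integer $G$. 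Since $\mathcal F_dH(t)=\lambda_X^d\mathcal F_dh(\lambda_Xt)\mathcal F_d\eta(t)$ and $\mathcal F_d\eta\equiv1$ on the ball of radius $\varepsilon/(4\pi)$, the quantity $\alpha_0(x,y)(2\pi)^{-d}\mathcal F_dH(d(x,y)/(2\pi))$ coincides with $\alpha_0(x,y)(2\pi)^{-d}\lambda_X^d\mathcal F_dh(\lambda_Xd(x,y)/(2\pi))$ whenever $d(x,y)<\varepsilon/2$; and when $d(x,y)\ge\varepsilon/2$ one has $\lambda_Xd(x,y)/(2\pi)\ge\lambda_X\varepsilon/(4\pi)$, so that $|\mathcal F_dh(\lambda_Xd(x,y)/(2\pi))|\le C\lambda_X^{-2G}$ and, using the boundedness of $\alpha_0$ and $\mathcal F_d\eta$ on the compact $\M\times\M$, this replacement costs at most $C\lambda_X^{d-2G}$.

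Finally I would collect the four error terms $C\lambda_X^{d-2}/(1+\lambda_Xd(x,y))^{2G-2}$, $C\lambda_X^{d-2G}\log\lambda_X$, $C\lambda_X^{d-2G}$ and $C\lambda_X^{3d-\lfloor 2G\rfloor}$. As $3d-\lfloor 2G\rfloor-(d-2G)=2d+(2G-\lfloor 2G\rfloor)\ge 2d\ge 2$ and $\log t\le t^2$ for $t\ge1$, for $\lambda_X\ge1$ both $\lambda_X^{d-2G}$ and $\lambda_X^{d-2G}\log\lambda_X$ are dominated by $\lambda_X^{3d-\lfloor 2G\rfloor}$ — the finitely many remaining values of $X$ being absorbed into the constant — which leaves exactly the claimed bound $C\lambda_X^{d-2}/(1+\lambda_Xd(x,y))^{2G-2}+C\lambda_X^{3d-\lfloor 2G\rfloor}$. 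There is no deep difficulty in this last step: all the analytic substance lies in the Hadamard parametrix and in Theorem \ref{teo5}, and the only points that need care are the truncation of the series (which uses the compact support of $h$) and the treatment of $\mathcal F_d\eta-1$ in the far regime $d(x,y)\ge\varepsilon/2$.
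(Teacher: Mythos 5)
Your proposal is correct and follows essentially the same route as the paper: the identical decomposition into the $I$-series, the $H$-series minus its main term, and the replacement of $\mathcal F_dH$ by $\lambda_X^d\mathcal F_dh$ via the near/far split at $d(x,y)=\varepsilon/2$, each estimated by the two preceding theorems. Your explicit absorption of the $\lambda_X^{d-2G}$ and $\lambda_X^{d-2G}\log\lambda_X$ terms into $\lambda_X^{3d-\lfloor 2G\rfloor}$ only spells out a step the paper leaves implicit.
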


\begin{proof} It suffices to observe that 
\begin{align*}
&\sum_{m=0}^{+\infty}h\left(\frac{\lambda_m}{\lambda_X}\right)\varphi_m(x)\overline{\varphi_m(y)}-\alpha_0(x,y)\frac {\lambda_X^d}{(2\pi)^d}\mathcal F_dh\left(\frac{\lambda_Xd(x,y)}{2\pi}\right)\\
=&\sum_{m=0}^{+\infty}I\left({\lambda_m}\right)\varphi_m(x)\overline{\varphi_m(y)}\\
+&\sum_{m=0}^{+\infty}H\left({\lambda_m}\right)\varphi_m(x)\overline{\varphi_m(y)}-\alpha_0(x,y)\frac 1{(2\pi)^d}\mathcal F_dH\left(\frac{d(x,y)}{2\pi}\right)\\
+&\alpha_0(x,y)\frac 1{(2\pi)^d}\mathcal F_dH\left(\frac{d(x,y)}{2\pi}\right)-\alpha_0(x,y)\frac {\lambda_X^d}{(2\pi)^d}\mathcal F_dh\left(\frac{\lambda_Xd(x,y)}{2\pi}\right).
\end{align*}
The estimates of the first two terms follow from the previous theorems. Concerning the last term, since
\[
\mathcal F_dH\left(\frac{d(x,y)}{2\pi}\right)=\lambda_X^d\mathcal F_dh\left(\frac{\lambda_Xd(x,y)}{2\pi}\right)\mathcal F_d\eta\left(\frac{d(x,y)}{2\pi}\right)
\]
and since $\mathcal F_d\eta(t)$ equals $1$ for $t\le\varepsilon/4\pi$, it follows that 
\[
\alpha_0(x,y)\frac 1{(2\pi)^d}\mathcal F_dH\left(\frac{d(x,y)}{2\pi}\right)-\alpha_0(x,y)\frac {\lambda_X^d}{(2\pi)^d}\mathcal F_dh\left(\frac{\lambda_Xd(x,y)}{2\pi}\right)
\]
equals zero when $d(x,y)\le \varepsilon/2$, and when $d(x,y)\ge\varepsilon/2$ it is bounded in absolute value by
\[
\|\alpha_0\|_\infty\frac 1{(2\pi)^d}\lambda_X^d\left(\|\mathcal F_d\eta\|_\infty+1\right)\frac C{(1+\lambda_Xd(x,y))^{2G}}
\le
C\lambda_X^{d-2G}.
\]\end{proof}


\begin{proof}[Proof of Theorem \ref{main}.]
Since $G>d+1$, we can apply Theorem \ref{preciso}:
\begin{align*}
\left|\sum_{m=0}^{X}h\left(\frac{\lambda_m}{\lambda_X}\right)\varphi_m(x)\overline{\varphi_m(y)}-\alpha_0(x,y)\frac {\lambda_X^d}{(2\pi)^d}\mathcal F_dh\left(\frac{\lambda_Xd(x,y)}{2\pi}\right)\right|
&\le C
\frac{\lambda_X^{d-2}}{(1+\lambda_X d(x,y))^{2G-2}}
+C\lambda_X^{-\lfloor 2G\rfloor+3d}\\
&\le C
\frac{\lambda_X^{d-2}}{(1+\lambda_X d(x,y))^{\lfloor 2G\rfloor-2-2d}}.
\end{align*}
This proves point (i). As for point (ii),
it suffices to set 
$
h=\psi\ast\psi
$
where 
\[
\left|\mathcal F_d\psi(\xi)\right|\le C\frac 1{(1+|\xi|)^{G}},
\]
with $\psi$ a nonnegative radial function, compactly supported in the ball centered at the origin and with radius $1/2$ and with $\|\psi\|_2=1$.
\end{proof}

\section{An application}\label{application}

As we mentioned in the Introduction, an explicit expression of the kernel as 
a sum of a nonnegative term and a bounded remainder, allows to simplify the original proof of the  following theorem, a version of the Cassels--Montogomery inequality for compact manifolds recently proved in \cite{BGG},

\begin{theorem}
\label{cassels} There exists a positive constant $C$ such that for all integers
$N$ and $X$ and for all finite sequences of $N$ points in $\mathcal{M}$,
$\left\{  x\left(  j\right)  \right\}  _{j=1}^{N}$, and positive weights
$\left\{  a_{j}\right\}  _{j=1}^{N}$ we have%
\begin{equation}\label{cass}
\sum_{m=0}^{X}\left\vert \sum_{j=1}^{N}a_{j}\varphi_{m}\left(  x\left(
j\right)  \right)  \right\vert ^{2}\geq CX\sum_{j=1}^{N}a_{j}%
^{2}.
\end{equation}
\end{theorem}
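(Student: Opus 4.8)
The plan is to test the quadratic form on the left of \eqref{cass} against the almost-positive kernel of Theorem~\ref{main}. First I would dispose of small $X$: keeping only the $m=0$ term and using $\varphi_0\equiv1$ gives $\sum_{m=0}^{X}|\sum_j a_j\varphi_m(x(j))|^2\ge(\sum_j a_j)^2\ge\sum_j a_j^2$, so \eqref{cass} holds with $C=(X_0+1)^{-1}$ for every $X\le X_0$, and it suffices to treat $X\ge X_0$ with $X_0$ as large as convenient. Then I would fix a function $h$ as provided by Theorem~\ref{main}(ii) for a large integer $G$, so that $0\le h\le h(0)=1$, $\mathcal F_dh\ge0$, $\mathcal F_dh(0)>0$, and the exponent $\gamma:=\lfloor2G\rfloor-2d-2$ exceeds $d$. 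Since the $a_j$ are real and positive, $h(\lambda_m/\lambda_X)=0$ for $\lambda_m\ge\lambda_X$, and $0\le h(\lambda_m/\lambda_X)\le1$ otherwise, the kernel $K_X$ of Theorem~\ref{main} satisfies
\[
\sum_{m=0}^{X}\Bigl|\sum_{j=1}^{N}a_j\varphi_m(x(j))\Bigr|^2\ \ge\ \sum_{m=0}^{X}h(\lambda_m/\lambda_X)\Bigl|\sum_{j=1}^{N}a_j\varphi_m(x(j))\Bigr|^2\ =\ \sum_{j,k=1}^{N}a_ja_k\,K_X(x(j),x(k)),
\]
so the whole matter reduces to bounding the right-hand side from below by $c\,\lambda_X^d\sum_j a_j^2$; the theorem then follows by Weyl's law $X\le C'\lambda_X^d$ (valid for large $X$) and the small-$X$ reduction.

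Next I would plug in $K_X=M+R$ from Theorem~\ref{main}, with $M(x,y)=(2\pi)^{-d}\alpha_0(x,y)\lambda_X^d\,\mathcal F_dh(\lambda_Xd(x,y)/(2\pi))$ and $|R(x,y)|\le C\lambda_X^{d-2}(1+\lambda_Xd(x,y))^{-\gamma}$. The structural point is that $M\ge0$ on all of $\mathcal M\times\mathcal M$, because $\alpha_0\ge0$ (Theorem~\ref{sogge}) and $\mathcal F_dh\ge0$. Partition $\mathcal M$ into Borel pieces $P_1,\dots,P_L$ of diameter $\le\theta/\lambda_X$, each containing a geodesic ball of radius comparable to $\theta/\lambda_X$ — for instance Voronoi cells of a maximal $(\theta/\lambda_X)$-separated set — where $\theta$ is a small fixed constant, and put $\tilde A_i=\sum_{x(j)\in P_i}a_j$. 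If $\theta$ is small enough that $\mathcal F_dh(t)\ge\tfrac12\mathcal F_dh(0)$ for $|t|\le\theta/(2\pi)$, and $X_0$ large enough that $d(x,y)\le\theta/\lambda_X$ forces $\alpha_0(x,y)\ge\tfrac12$ (possible since $\alpha_0$ is continuous and $\alpha_0(x,x)=1$), then $M(x(j),x(k))\ge c_1\lambda_X^d$ whenever $x(j),x(k)$ lie in the same piece; discarding the other (nonnegative) terms of $\sum_{j,k}a_ja_kM(x(j),x(k))$ yields $\sum_{j,k}a_ja_kM(x(j),x(k))\ge c_1\lambda_X^d\sum_i\tilde A_i^2$.

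For the remainder I would use $d(x(j),x(k))\ge d(P_i,P_{i'})$ for $x(j)\in P_i$, $x(k)\in P_{i'}$, together with $2\tilde A_i\tilde A_{i'}\le\tilde A_i^2+\tilde A_{i'}^2$, to reach $|\sum_{j,k}a_ja_kR(x(j),x(k))|\le C\lambda_X^{d-2}\big(\sum_i\tilde A_i^2\big)\sup_i\sum_{i'}(1+\lambda_Xd(P_i,P_{i'}))^{-\gamma}$; as the pieces are pairwise disjoint of volume $\approx(\theta/\lambda_X)^d$, at most $\approx(\ell+1)^{d-1}$ of them meet the shell $\lambda_Xd(P_i,P_{i'})\in[\ell,\ell+1)$, and since $\gamma>d$ the inner sum is $\le C$. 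Hence $|\sum_{j,k}a_ja_kR(x(j),x(k))|\le C\lambda_X^{d-2}\sum_i\tilde A_i^2$. Combining with the bound on $M$ and choosing $X_0$ so large that $C\lambda_X^{-2}\le c_1/2$ for $X\ge X_0$, we get $\sum_{j,k}a_ja_kK_X(x(j),x(k))\ge\tfrac12 c_1\lambda_X^d\sum_i\tilde A_i^2\ge\tfrac12 c_1\lambda_X^d\sum_j a_j^2$, the last step because $\tilde A_i^2\ge\sum_{x(j)\in P_i}a_j^2$; with Weyl's law this is $\ge CX\sum_j a_j^2$.

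The main obstacle I anticipate is the remainder for clustered configurations: if far more than $\lambda_X^2$ points sit in a ball of radius $\lambda_X^{-1}$, then $\sum_i\tilde A_i^2$ may greatly exceed $\sum_j a_j^2$, and an estimate keeping only the diagonal $j=k$ of $M$ — which gives only $\gtrsim\lambda_X^d\sum_j a_j^2$ — cannot absorb it. The resolution is to retain the full within-piece block of $M$, so that one and the same quantity $\sum_i\tilde A_i^2$ bounds $M$ from below and $R$ from above, with the main term winning by the factor $\lambda_X^2$; the partition, the shell count, Weyl's law, and the reduction to large $X$ are then routine.
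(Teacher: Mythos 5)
Your proposal is correct and follows essentially the same route as the paper: test the quadratic form against the almost-positive kernel of Theorem \ref{main}, partition $\mathcal M$ into pieces of diameter $\approx\lambda_X^{-1}$, keep the full within-piece block of the nonnegative main term (bounded below by $c\,\lambda_X^d\sum_i\tilde A_i^2$), and absorb the remainder, which is $\lesssim\lambda_X^{d-2}\sum_i\tilde A_i^2$ by the $(1+\lambda_X d(x,y))^{-\gamma}$ decay and a shell count. The only differences are cosmetic: you use Voronoi cells of a separated set and the inequality $2\tilde A_i\tilde A_{i'}\le\tilde A_i^2+\tilde A_{i'}^2$, where the paper uses the equal-measure partition of \cite{GL} and an ordering of the block sums $S_1\ge S_2\ge\ldots$
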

\begin{proof}
It suffices to show the theorem for large $X$.
Let $Y=\kappa
X$, with $\kappa$ a positive integer which we will be chosen  later. By \cite[Theorem 2]{GL}, the manifold $\mathcal{M}$ can be split into $Y$
disjoint regions $\left\{  \mathcal{R}_{i}\right\}  _{i=1}^{Y}$ with measure
$\left\vert \mathcal{R}_{i}\right\vert =1/Y$ and such that each region
contains a ball of radius $c_{1}Y^{-1/d}$ and is contained in a ball of radius
$c_{2}Y^{-1/d}$, for appropriate values of $c_{1}$ and $c_{2}$ independent of
$Y$. Call $\left\{  \mathcal{B}_{r}\right\}  _{r=1}^{R}$ the sequence
of all the regions in $\left\{  \mathcal{R}_{i}\right\}
_{i=1}^{Y}$ which contain at least one of the points $x\left(  j\right)  $,
 $K_{r}$ the cardinality of the set $\left\{  j=1,\ldots,N:x\left(
j\right)  \in\mathcal{B}_{r}\right\}  $ and $S_{r}$ the sum of the weights
$\{a_{j}\}$ corresponding to points $x\left(  j\right)  \in\mathcal{B}_{r}$.
Assume without loss of generality that
\[
S_{1}\geq S_{2}\geq\ldots\geq S_{R}>0.
\]
Rename the sequence $\left\{  x\left(  j\right)  \right\}  _{j=1}^{N}$ as
\[
\left\{  x_{r,j}\right\}  _{\substack{r=1,\ldots,R\\j=1,\ldots,K_{r}}}
\]
with $x_{r,j}\in\mathcal{B}_{r}$ for all $j=1,\ldots,K_{r}$, and the sequence
$\left\{  a_{j}\right\}  _{j=1}^{N}$ as%
\[
\left\{  a_{r,j}\right\}  _{\substack{r=1,\ldots,R\\j=1,\ldots,K_{r}}}.
\]
Observe that
$
S_{r}=\sum_{j=1}^{K_{r}}a_{r,j}.
$
Inequality (\ref{cass}) follows immediately from%
\begin{equation}
\sum_{m=0}^{X}\left\vert \sum_{r=1}^{R}\sum_{j=1}^{K_{r}}a_{r,j}\varphi
_{m}\left(  x_{r,j}\right)  \right\vert ^{2}\geq CX\sum_{r=1}^{R}\left(\sum
_{j=1}^{K_{r}}a_{r,j}\right)^{2}.\label{lemma}%
\end{equation}
Notice that, if $h$ is as in the hypotheses of Theorem \ref{main}, then
\begin{align*}
  \sum_{m=0}^{X}\left\vert \sum_{r=1}^{R}\sum_{j=1}^{K_{r}}a_{r,j}\varphi
_{m}\left(  x_{r,j}\right)  \right\vert ^{2}
&  \geq\sum_{m=0}^{+\infty}h\left(  \frac{\lambda_{m}}{\lambda_{X}}\right)
\left\vert \sum_{r=1}^{R}\sum_{j=1}^{K_{r}}a_{r,j}\varphi_{m}\left(
x_{r,j}\right)  \right\vert ^{2}\label{somma}\\
&  =\sum_{r=1}^{R}\sum_{j=1}^{K_{r}}\sum_{s=1}^{R}\sum_{i=1}^{K_{s}}%
a_{r,j}a_{s,i}\left(  \sum_{m=0}^{+\infty}h\left(  \frac{\lambda_{m}}%
{\lambda_{X}}\right)  \varphi_{m}\left(  x_{r,j}\right)  \overline{\varphi
_{m}\left(  x_{s,i}\right)  }\right) \\
&\ge 
\sum_{r=1}^{R}\sum_{j=1}^{K_{r}}\sum_{s=1}^{R}\sum_{i=1}^{K_{s}}%
a_{r,j}a_{s,i}
\frac{\alpha_0(x_{r,j},x_{s,i})}{2\pi}\lambda_X^d {\mathcal F_d}h\left(\frac{\lambda_Xd(x_{r,j},x_{s,i})}{2\pi}\right)
\\
&\quad
-C\sum_{r=1}^{R}\sum_{j=1}^{K_{r}}\sum_{s=1}^{R}\sum_{i=1}^{K_{s}}%
a_{r,j}a_{s,i}
\frac{\lambda_X^{d-2}}{\left(1+\lambda_Xd(x_{r,j},x_{s,i})\right)^{\lfloor 2G\rfloor-2d-2}}.
\end{align*}
Let $\kappa$ large enough so that if $x,y\in\mathcal{B}_{r}$%
\[
\mathcal{F}_{d}h\left(  \frac{\lambda_{X}d\left(  x,y\right)  }{2\pi}\right)
 \geq\frac{ \mathcal{F}_{d}h  \left(
0\right)  }{2}>0.
\]
Thus%
\begin{align*}
&  \sum_{r=1}^{R}\sum_{j=1}^{K_{r}}\sum_{s=1}^{R}\sum_{i=1}^{K_{s}}%
a_{r,j}a_{s,i}\frac{\alpha_0(x_{r,j},x_{s,i})}{2\pi}\lambda_X^d {\mathcal F_d}h\left(\frac{\lambda_Xd(x_{r,j},x_{s,i})}{2\pi}\right) \\   &\geq CX\sum_{r=1}^{R}\sum_{j=1}^{K_{r}}\sum_{i=1}^{K_{r}}a_{r,j}%
a_{r,i}=CX\sum_{r=1}^{R}\left(  \sum_{j=1}^{K_{r}}a_{r,j}\right)  ^{2}.
\end{align*}

In order to estimate the remainder, let us call $z_r$ the center of the ball of radius
$c_{2}Y^{-1/d}$ containing the region $\mathcal{B}_{r}$ and let $c_{3}=10c_{2}$. For every $r=1,\ldots,R$ we
will consider separately the contribution of those values of $s$ for which
$\mathcal{B}_{s}$ is near $\mathcal{B}_{r}$ (meaning that $\mathcal{B}%
_{s}$ is contained in the ball centered at $z_{r}$ with radius
$c_{3}Y^{-1/d}$) and the contribution of the remaining values of $s$, for
which we will say that $\mathcal{B}_{s}$ is far from $\mathcal{B}_{r}$. Notice
that there are at most
\[
\frac{\left\vert B\left(  z_{r},c_{3}Y^{-1/d}\right)  \right\vert }{Y^{-1}%
}\leq\frac{C\left(  c_{3}Y^{-1/d}\right)  ^{d}}{Y^{-1}}\leq Cc_{3}^{d}%
\]
regions $\mathcal{B}_{s}$ near $\mathcal{B}_{r}$. Thus, since
$\lambda_{X}\sim X^{1/d}$ and since  $\sum
_{j=1}^{K_{r}}a_{r,j}\geq\sum_{i=1}^{K_{s}}a_{s,i}$ for $r\leqslant s$, setting $M=\lfloor 2G\rfloor -2d-2$ we obtain,
\begin{align*}
&  \sum_{r=1}^{R}\sum_{j=1}^{K_{r}}\sum_{s=r}^{R}\sum_{i=1}^{K_{s}}%
a_{r,j}a_{s,i}\frac{\lambda_{X}^{d-2}}{\left(  1+\lambda_{X}d\left(
x_{r,j},x_{s,i}\right)  \right)  ^{M}}\\
\leq &  CX^{1-2/d}\sum_{r=1}^{R}\sum_{\substack{s=r\,\\\mathcal{B}_{s}%
\,\text{near\thinspace}\mathcal{B}_{r}}}^{R}\sum_{j=1}^{K_{r}}a_{r,j}\sum_{i=1}^{K_{s}%
}a_{s,i}
  +CX^{1-2/d}\sum_{r=1}^{R}\sum_{\substack{s=r\,\\\mathcal{B}_{s}\text{\thinspace
far from\thinspace}\mathcal{B}_{r}}}^{R}\sum_{j=1}^{K_{r}}a_{r,j}\sum_{i=1}^{K_{s}%
}a_{s,i}\left(  \lambda_{X}d\left(  x_{r,j},x_{s,i}\right)  \right)  ^{-M}\\
\leq &  CX^{1-2/d}\sum_{r=1}^{R}\left(  \sum_{j=1}^{K_{r}}a_{r,j}\right)
^{2} +CX^{1-2/d}\sum_{r=1}^{R-1}\sum_{\substack{s=r+1\\\mathcal{B}_{s}\text{\thinspace
far from\thinspace}\mathcal{B}_{r}}}^{R}\sum_{j=1}^{K_{r}}a_{r,j}\sum_{i=1}^{K_{s}%
}a_{s,i}\left(  X^{1/d}d\left(  x_{r,j},x_{s,i}\right)  \right)  ^{-M}.
\end{align*}
Using again that $\sum_{j=1}^{K_{r}}a_{r,j}\geq\sum
_{i=1}^{K_{s}}a_{s,i}$ for $r\leq s$,
\begin{align*}
&  \sum_{r=1}^{R-1}\sum_{\substack{s=r+1\\B_{s}\text{\thinspace far
from\thinspace}B_{r}}}^{R}\sum_{j=1}^{K_{r}}a_{r,j}\sum_{i=1}^{K_{s}}%
a_{s,i}\left(  X^{1/d}d\left(  x_{r,j},x_{s,i}\right)  \right)  ^{-M}\\
= &  \sum_{r=1}^{R-1}\sum_{j=1}^{K_{r}}a_{r,j}\sum_{\ell=0}^{\infty}%
\sum_{\substack{s>r:\\2^{\ell-1}c_{3}Y^{-1/d}\leq d\left(  z_{r},z_{s}\right)
\leq2^{\ell}c_{3}Y^{-1/d}}}\sum_{i=1}^{K_{s}}a_{s,i}\left(  X^{1/d}d\left(
x_{r,j},x_{s,i}\right)  \right)  ^{-M}\\
\leq &  C\sum_{r=1}^{R-1}\sum_{j=1}^{K_{r}}a_{r,j}\sum_{\ell=0}^{\infty
}2^{-\ell M}\sum_{\substack{s>r:\\d\left(  z_{r},z_{s}\right)  \leq2^{\ell}c_{3}Y^{-1/d}}}\sum_{i=1}^{K_{s}}a_{s,i}\\
\leq &  C\sum_{r=1}^{R-1}\sum_{j=1}^{K_{r}}a_{r,j}\sum_{\ell=0}^{\infty
}2^{-\ell M}\frac{\left(  2^{\ell}Y^{-1/d}\right)  ^{d}}{Y^{-1}}\sum
_{j=1}^{K_{r}}a_{r,j}\\
\leq &  C\sum_{r=1}^{R-1}\left(  \sum_{j=1}^{K_{r}}a_{r,j}\right)  ^{2}%
\sum_{\ell=0}^{\infty}2^{-\ell\left(  M-d\right)  }\leq C\sum_{r=1}%
^{R-1}\left(  \sum_{j=1}^{K_{r}}a_{r,j}\right)  ^{2}.
\end{align*}
\end{proof}


\begin{thebibliography}{9} 


%
%
%
\bibitem{askey} R. Askey, {\em Summability of Jacobi series}, Trans. Amer. Math. Soc. {\bf 179} (1973), 71--84.

\bibitem{AG} R. Askey, G. Gasper, {\em Positive Jacobi Polynomial Sums II}, Amer. J. Math. {\bf 98} (1976), 709--737.

\bibitem{BDS} {D. Bilyk, F. Dai, S. Steinerberger}, \emph{General and refined
Montgomery lemmata}, Math. Ann. \textbf{373} (2019), 1283--1297.

\bibitem{BC} {L. Brandolini, L. Colzani}, \emph{Decay of Fourier Transforms and Summability of Eigenfunction Expansions}, Ann. Sc. Norm. Super. Pisa Cl. Sci. (4).
\textbf{29} (2000), 611--638.

\bibitem{BGG}
{L. Brandolini, B. Gariboldi, G. Gigante}, \emph{On a sharp lemma of Cassels and Montgomery on manifolds},  Math. Ann. \textbf{379} (2021), 1807--1834.


\bibitem{Ch} 
{I. Chavel, B. Randol \and J. Dodziuk},
{\em Eigenvalues in Riemannian Geometry}, Elsevier Science, 1984.

\bibitem{CGT} L. Colzani, G. Gigante, G. Travaglini, \emph{Trigonometric approximation and a general form of the Erd\"os Tur\'an inequality}, Trans. Amer. Math. Soc. \textbf{363} no. 2 (2011), 1101--1123.

%
%
%
\bibitem{DC} 
{M. P. do Carmo}, \textit{Riemannian Geometry}, Translated from the second Portuguese edition by Francis Flaherty. 
{Mathematics: Theory \& Applications,} Birkh\"auser Boston, Inc., Boston, MA, 1992.


%
%
%
%
\bibitem{GL}
{G. Gigante, \and P. Leopardi}, {\em Diameter bounded equal measure partitions of Alfhors regular metric measure spaces}, Discrete Comput. Geom. \textbf{57} (2017), 419--430.
%
%
%
%
%
%
%
\bibitem{sogge} {C. D. Sogge}, \emph{Hangzhou Lectures on Eigenfunctions of
the Laplacian}, Annals of Mathematics Studies, \textbf{188}, Princeton University Press (2014).

\bibitem {SW}{E. M. Stein, G. Weiss}, \emph{Introduction to Fourier analysis
on Euclidean spaces}, Princeton University Press, Princeton, NJ, 1971.

\bibitem{st}{K. Stempak, W. Trebels}, \emph{Hankel multipliers and
transplantation operators}, Studia Math. \textbf{126} (1997), 51--66.

\bibitem{T} M. Taylor, \emph{Pseudodifferential operators},
Princeton University Press, Princeton NJ, 1981.

\bibitem{giancarlo} G. Travaglini, \emph{Fej\'er kernels for Fourier series on $T^n$ and on compact Lie groups}, Math. Z. \textbf{216} (1994), no. 2, 265--281.

%
%
%
%
%
%
%

%
%
%
\end{thebibliography}
\end{document}